\newtheorem{thm}{Theorem}[section]
\newtheorem{lem}[thm]{Lemma}
\newtheorem{definition}[thm]{Definition}
\newtheorem{rmk}[thm]{Remark}
\newtheorem{question}[thm]{Question}
\begin{document}

\title{On the Tropical Discs Counting on Elliptic K3 Surfaces with General Singular Fibres}
\author{Yu-Shen Lin}
\maketitle
\section*{Abstract}
Using Lagrangian Floer theory, we study the tropical geometry of K3 surfaces with more general singular fibres other than simple nodal curves. In particular, we give the local models for the type $I_n$, $II$, $III$ and $IV$ singular fibres in the Kodaira's classification. This generalizes the correspondence theorem in \cite{L8} between open Gromov-Witten invariants/tropical discs counting to these cases.
\section{Introduction}
%\begin{align*}
%   -\gamma_3
%\end{align*}

The notion of tropical geometry is invented in honor of the Brazilian mathematician Imre Simon who pioneered the studies of min-plus algebras.
Tropical geometry later becomes an important branch of algebraic geometry and often transforms algebraic geometric problems into combinatoric ones. Enumerative geometry benefits much from tropical geometry. Tropical geometry occurs naturally when certain worst possible degeneration happens, which is known as the maximal unipotent point. The Strominger-Yau-Zaslow degeneration \cite{SYZ}\cite{KS4} exactly describes such scenario: the special Lagrangian fibrations of the Calabi-Yau manifolds collapse to their base affine manifolds when the Calabi-Yau manifolds approaching the maximal unipotent points. Moreover, the holomorphic curves in the Calabi-Yau manifolds degenerate into union of straight line segments on the base affine manifolds. These degenerate $1$-skeletons are known as the tropical curves. The enumeration of the holomorphic curves are expected to be computed via the enumeration of tropical curves on the affine manifolds. Mikhalkin \cite{M2} first achieved the success in this aspect by proving that the counting of Riemann surfaces with incidence conditions in toric surfaces can be computed by the weighted counting of tropical curves on $\mathbb{R}^2$\footnote{One view $\mathbb{R}^2$ as the Legendre dual of the interior of the moment polytope.} with the corresponding incidence conditions. The correspondence theorem is later generalized to toric manifolds for all dimension (but only for genus zero curves) by Nishinou-Siebert \cite{NS}. 

Following the pioneering work of Kontsevich-Soibelman \cite{KS1}\cite{KS2} and Gross-Siebert \cite{GS1} and Fukaya \cite{F3}, the author defined the notion of tropical discs when the K3 surfaces with special Lagrangian fibration admit only type $I_1$ singular fibres, namely immersed singular fibres \cite{L8}. It is well-known that the correct definition of tropical discs can go into the type $I_1$ singularities from certain two directions\cite{KS1}\cite{G3}. Indeed, the local relative homology around a type $I_1$ singular fibre is generated by the associated Lefschetz thimble. The Lefschetz thimble (up to a sign) determines the direction that a tropical discs can go into the singularity. The author also defined the notion of admissible ones which contributes to the weighted counting of the tropical discs counting and each admissible tropical disc is assigned with a weight \cite{L8}. The tropical discs counting $\tilde{\Omega}^{trop}$ is then the weighted counting of the admissible tropical discs. On the other hand, using the similar idea of reduced Gromov-Witten invariants in algebraic geometry, the author also defined the open Gromov-Witten invariants on K3 surfaces \cite{L4}. Moreover, a correspondence theorem which connects the open Gromov-Witten invariants and the tropical discs counting is established in \cite{L8} when the special Lagrangian fibration admits only type $I_1$ singular fibres.  Although generic elliptic K3 surfaces admit only type $I_1$ singular fibres (see Theorem \ref{35}), the type $I_1$ singular fibres may emerge together and develop other type of singularities. Since any special Lagrangian fibration in a K3 surface becomes an elliptic fibration after hyperK\"ahler rotation. All the possible singular fibres are classified by Kodaira \cite{K1}. Especially for the singularities other than the type $I_n$ singularity, all the $1$-cycles of the torus fibres are "vanishing cycles". In particular, we want to answer the questions:
\begin{question} 
  What are the directions a tropical discs can go into a singularity that appears in the Kodaira's list?
\end{question}
\begin{question}
  What are the corresponding open Gromov-Witten invariants? 
\end{question}
The second question provides the "initial condition" for the wall-crossing that governs the construction of the mirror manifold. In this paper, we will explore the corresponding tropical geometry when the special Lagrangian fibration admits type $I_n$, $II$, $III$, $IV$ and $I_0^*$ singular fibres in the classification. 

The article is arranged as follows: In Section 2, we review the definition and properties of the open Gromov-Witten invariants on hyperK\"ahler surfaces with elliptic fibration developed in \cite{L4}\cite{L8}. In Section 3, we review the tropical geometry on K3 surfaces with only type $I_1$ singular fibres. In Section 4, we discuss the tropical geometry in the presence of type $I_n$, $II$, $III$, $IV$ type singular fibre. We also have some partial answer to the case of type $I_0^*$ singular fibres.

\section*{Acknowledgement}
The author would like to thank Shing-Tung Yau for constant support and encouragement. The author want to thank Chiu-Chu Melissa Liu, Tom Sutherland, Hansol Hong for helpful discussions. The author would also like to thank the referees for the useful comments and suggestions. Part of the work is done during the period the author is in Columbia University. The author is supported by the Center of Mathematical Sciences and Applications at Harvard University at the time of submitting the paper. 

\section{Open Gromov-Witten Invariants on K3 Surfaces}
\subsection{HyperK\"ahler Geometry}
   \begin{definition}
     \begin{enumerate}
        \item A holomorphic symplectic $2$-form is a $d$-closed, non-degenerate holomorphic $2$-form.
        \item A K\"ahler manifold $X$ of dimension $n$ is a hyperK\"ahler manifold if there exists a covariant constant holomorphic symplectic $2$-form.            
     \end{enumerate}
     \end{definition}
Let $\Omega$ denote the covariant constant holomorphic symplectic  form and $\omega$ be the K\"ahler form, we have
            \begin{align*}
               \omega^n=\frac{(2n)!}{2^{2n}(n!)^2}\Omega^n\wedge \bar{\Omega}^n.
            \end{align*}  
The pair $(\omega,\Omega)$ then induces an $S^2$-family of complex structures determined by the holomorphic symplectic $2$-forms 
 \begin{align*}
 \Omega_{\zeta}&= -\frac{i}{2\zeta}\Omega+\omega-\frac{i}{2}\zeta\bar{\Omega}, \zeta\in \mathbb{P}^1 
 \end{align*}\footnote{Here we use the convention that $\Omega_0=\Omega$ and $\Omega_{\infty}=\bar{\Omega}$} and the corresponding K\"ahler forms are given by  
  \begin{align*}
     \omega_{\zeta}&= \frac{i(-\zeta+\bar{\zeta})\mbox{Re}\Omega-(\zeta+\bar{\zeta})\mbox{Im}\Omega+(1-|\zeta|^2)\omega}{1+|\zeta|^2}.
  \end{align*} In particular, when $\zeta=e^{i\vartheta}$, $\vartheta\in S^1$, we have
   \begin{align} \label{1001}
        \omega_{\vartheta}&:=\omega_{e^{i\vartheta}}= -\mbox{Im}(e^{-i\vartheta}\Omega)\notag \\
                  \Omega_{\vartheta}&:=\Omega_{e^{i\vartheta}}=\omega-\mbox{Re}(e^{-i\vartheta}\Omega).
   \end{align} It is straight-forward to check that the pair $(\omega_{\vartheta},\Omega_{\vartheta})$ determines a hyperK\"ahler structure on the same underlying space and we will denote the corresponding hyperK\"ahler manifold by $X_{\vartheta}$. 
   
   Let $L$ be a holomorphic Lagrangian in $X$, i.e., $\Omega|_L=0$. Then from the equation (\ref{1001}), $L$ is a special Lagrangian submanifold in $X_{\vartheta}$ and vice versa. In particular, the hyperK\"ahler manifold $X$ admits a holomorphic Lagrangian fibration if and only if $X_{\vartheta}$ admits a special Lagrangian fibration, for all $\vartheta\in S^1$. This is the so-called hyperK\"ahler rotation trick. 
   
   Let $X\rightarrow B$ be a hyperK\"ahler manifold with holomorphic Lagrangian fibration. Matsushita \cite{M} proved that $B$ is the projective space if it is smooth. Let $\Delta\subseteq B$ be the discriminant locus, then there exist integral affine structures on $B_0=B\backslash \Delta$. Indeed, consider the following function $Z$ called the central charge defined on a local system of lattices over $B_0$
      \begin{align*}
         Z:\bigcup_{u\in B_0}&H_2(X,L_u)\rightarrow \mathbb{C}\\ 
                         &\gamma_u\longmapsto Z_{\gamma}(u):=\int_{\gamma_u}\Omega.
      \end{align*} Choose local section $\gamma_i\in H_2(X,L_u)$ such that $\partial \gamma_i$ be the basis of $H_1(L_u)$, then
        \begin{align} \label{899}
        f_i(u):=\mbox{Re}(e^{-i\vartheta}Z_{\gamma_i}(u))
        \end{align}
         gives a set of integral affine coordinates on $B_0$. In dimension two, this is usually known as the complex affine coordinate induced by the special Lagrangian fibration on $X_{\vartheta}$ in the context of mirror symmetry. The integral affine structure on $B_0$ allows us to talk about tropical geometry in Section \ref{1005}. The holomorphicity of $\Omega$ implies that the central charge $Z$ is a (multi-value) holomorphic function on $B_0$.           
            
\subsection{Floer Theory on HyperK\"ahler Manifolds}
  Let $(M,\omega)$ be a symplectic manifold and $L$ be a Lagrangian submanifold. Choose a compatible complex structure $J$, Gromov \cite{G} started to explore the techniques of $J$-holomorphic discs to study the symplectic geometry of the pair $(M,L)$. One of important symplectic invariant is the Floer homology \cite{F7}. We will use the version constructed by Fukaya-Oh-Ohta-Ono \cite{FOOO}\cite{F1}:
    \begin{thm}
       There exists an $A_{\infty}$ structure on $H^*(L,\Lambda)$ constructed from the moduli spaces of holomorphic discs with boundary on $L$ which is unique up to pseudo-isotopies, where $\Lambda$ is the Novikov ring. 
    \end{thm}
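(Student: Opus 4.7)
The plan is to construct the $A_\infty$ operations $m_k\colon H^*(L,\Lambda)^{\otimes k} \to H^*(L,\Lambda)$ as Novikov-weighted pushforwards from moduli spaces of bordered $J$-holomorphic discs. For each class $\beta \in H_2(X,L)$ and each $k\geq 0$, I would first consider the moduli space $\mathcal{M}_{k+1}(L,\beta,J)$ of stable $J$-holomorphic maps $u\colon (D,\partial D)\to (X,L)$ representing $\beta$ with $k+1$ cyclically ordered boundary marked points. Because these moduli spaces generally fail to be smooth orbifolds of the expected dimension (owing to multiply-covered and obstructed components), the core technical step is to equip them with Kuranishi structures and compatible CF-perturbations in the sense of Fukaya--Oh--Ohta--Ono, so that the boundary evaluation maps $\mathrm{ev}_i\colon \mathcal{M}_{k+1}(L,\beta,J)\to L$ become weakly submersive in a virtual sense.

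The operations are then defined by
\[
m_k(\alpha_1,\ldots,\alpha_k) = \sum_{\beta} T^{\omega(\beta)}\,(\mathrm{ev}_0)_*\bigl((\mathrm{ev}_1,\ldots,\mathrm{ev}_k)^*(\alpha_1\times\cdots\times\alpha_k)\bigr),
\]
with $T$ the Novikov variable tracking symplectic area. The $A_\infty$ relations
\[
\sum_{k_1+k_2=k+1}\sum_{i} (-1)^{\epsilon} m_{k_1}\bigl(\alpha_1,\ldots,m_{k_2}(\alpha_i,\ldots),\ldots,\alpha_k\bigr)=0
\]
would then be read off from the description of the codimension-one boundary of the compactified moduli space: the Gromov boundary decomposes as a fibre product of smaller disc moduli along a common boundary node, and the induced decomposition of the virtual chain matches the sum on the left-hand side after a careful sign analysis.

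The main obstacle is twofold. First, the analytic and topological construction of Kuranishi structures which are coherent under the gluing of boundary nodes (so that the boundary identification is compatible with the perturbations) is notoriously delicate; this is where one genuinely uses the full machinery of \cite{FOOO}\cite{F1}. Second, establishing uniqueness up to pseudo-isotopy requires running a parametric version of the construction on a one-parameter family of auxiliary data (almost complex structures, Kuranishi charts, CF-perturbations) interpolating between two choices; this produces an $A_\infty$-homomorphism that is an isotopy equivalence, and hence identifies the resulting structures. Since the theorem is a direct citation of the Fukaya--Oh--Ohta--Ono construction rather than a new result, I would present the proof as an appeal to the relevant parts of \cite{FOOO}\cite{F1}, using this subsection only to fix the conventions (Novikov ring, orientation, cohomology model) that will be needed later for the open Gromov-Witten invariants.
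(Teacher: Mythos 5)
The paper gives no proof of this theorem; it is stated as a direct citation of Fukaya--Oh--Ohta--Ono, and your proposal correctly recognizes this and sketches the standard construction (Kuranishi structures, virtual chains from the disc moduli spaces, $A_\infty$ relations from codimension-one boundary strata, and a parametric argument for pseudo-isotopy uniqueness) before deferring to \cite{FOOO}\cite{F1}. This matches the paper's treatment, so there is nothing to correct.
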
 
   Let $m_k:H^*(L,\Lambda)^{\otimes k}\rightarrow H^*(L,\Lambda)$ be the corresponding $A_{\infty}$ operators. The associated  Maurer-Cartan moduli space is defined to be
   \begin{align*}
      \mathcal{MC}(L):=\{b\in H^1(L,\Lambda_+)|\sum_k m_k(b,\cdots,b)=0\}/\sim,
   \end{align*} where $\sim$ denotes the gauge equivalence. It is a standard algebra fact that pseudo-isotopies between $A_{\infty}$ algebras induce isomorphisms between the associate Maurer-Cartan moduli spaces \cite{F1}. Let $\phi$ be a $1$-parameter family of tamed almost complex structures with respect to $\omega$, we will denote the isomorphism of Maurer-Cartan spaces induced by the pseudo-isotopy by $F_{\phi}:\mathcal{MC}(L)\rightarrow \mathcal{MC}(L)$. In particular, we will apply to the following  situation: Let $X\rightarrow B$ (special) Lagrangian fibration  with $\mbox{dim}_{\mathbb{R}}X=4$. Denote $L_u$ to be the fibre over $u\in B$. Fix a reference fibre $L_u$ and $u_+,u_-\in B_0$ near $u$ such that $L_{u_{\pm}}$ do not bound any holomorphic discs. Choose a path $\phi$ from $u_+$ to $u_-$ and a $1$-parameter family of diffeomorphism mapping $L_{\phi(t)}$ to $L_u$. This induces a $1$-parameter family of tamed complex structures on $X$ and in this case the Maurer-Cartan space $\mathcal{MC}(L_u)\cong H^1(L_u,\Lambda_+)$. We denote the corresponding isomorphism of Maurer-Cartan spaces by $F_{u,\phi}:H^1(L_u,\Lambda_+)\rightarrow H^1(L_u,\Lambda_+)$ (will drop the subindex $u$ if there is no confusion) with some abuse of notation. This is known as the Fukaya's trick. 
   
   The following lemma is from homological algebra and useful for the later argument. 
   \begin{lem} \label{521} \cite{F1}
           Assume that $\phi$ is a contractible loop, then $F_{\phi}=\mbox{Id}$. 
       \end{lem}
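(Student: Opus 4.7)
My plan is to reduce the statement to the homotopy invariance of the induced map on Maurer--Cartan moduli and then exploit the contractibility of $\phi$. The essential point is that $F_{\phi}$ is built from a pseudo-isotopy of $A_{\infty}$ algebras determined by the $1$-parameter family of tamed almost complex structures coming from $\phi$ (combined with the diffeomorphisms identifying nearby fibres with $L_u$), and the general homological algebra of pseudo-isotopies tells us the induced isomorphism depends only on the pseudo-isotopy class.

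First I would verify that if two paths $\phi_0,\phi_1$ from $u_+$ to $u_-$ are homotopic rel endpoints through paths $\phi_s$ in the base, then the resulting $1$-parameter families of tamed complex structures fit together into a $2$-parameter family, which by the standard Fukaya--Oh--Ohta--Ono machinery yields a pseudo-isotopy between the two pseudo-isotopies. By the homological algebra fact cited before the lemma, gauge-equivalent pseudo-isotopies induce the same map on $\mathcal{MC}(L_u)$. Consequently $F_{\phi_0}=F_{\phi_1}$ whenever $\phi_0\simeq\phi_1$ rel endpoints.

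Next I would specialise to the lemma at hand: $\phi$ is a contractible loop, so $\phi$ is homotopic rel basepoint to the constant loop $\phi_{\text{triv}}$. The constant loop induces the trivial $1$-parameter family of complex structures, and the associated pseudo-isotopy is manifestly the identity pseudo-isotopy on the $A_{\infty}$ algebra of $L_u$. Hence $F_{\phi_{\text{triv}}}=\mathrm{Id}$, and combining with the previous step gives $F_{\phi}=F_{\phi_{\text{triv}}}=\mathrm{Id}$, which is what we want.

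The main obstacle, and the reason the lemma is nontrivial rather than tautological, is the construction of the $2$-parameter pseudo-isotopy of pseudo-isotopies: one needs to check that the relevant moduli spaces of holomorphic discs (with boundary on the moving fibre and interpolated by a $2$-simplex of tamed almost complex structures) can be arranged to be transversely cut out (after perturbation by Kuranishi/virtual techniques) and that the convergence in the Novikov ring holds uniformly across the homotopy. Once that technical framework is in place---which is exactly the content of the reference \cite{F1}---the argument above is essentially formal.
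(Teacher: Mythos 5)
The paper gives no proof of this lemma at all---it is quoted from \cite{F1} as a piece of homological algebra---so there is nothing internal to compare against. Your outline (homotopic paths give homotopic pseudo-isotopies, hence equal maps on $\mathcal{MC}(L_u)$; a contractible loop is homotopic rel basepoint to the constant loop, whose pseudo-isotopy is trivially the identity) is the standard argument and is exactly the content of the cited reference, with the genuine technical work correctly located in the construction of the two-parameter family of perturbed moduli spaces and the statement that homotopic pseudo-isotopies induce the same isomorphism of Maurer--Cartan spaces.
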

     Choose $e_1,e_2$ be basis of $H^1(L_u,\mathbb{Z})$. Given a general element $b=x_1e_1+x_2e_2\in H^1(L_u,\Lambda_+)$, we write $F_{\phi}(b)=(F_{\phi}(b))_1e_1+(F_{\phi}(b))_2e_2$. Inspired by the context of mirror symmetry, we denote $z_k=\exp{(F_{\phi}(b))_k}$ and $F_{\phi}$ induces an isomorphism of algebras, denoted by $\tilde{F}_{\phi}$:
         \begin{align}\label{520}
            \Lambda[[H_1(L_u)]]\rightarrow \Lambda[[H_1(L_u)]] \notag \\
           \tilde{F}_{\phi}:     z_k \mapsto \exp{(F_{\phi}(b))_k}  .
         \end{align}
     \begin{lem}
       The composition of the transformation is compatible with the transformation of composition of paths.
     \end{lem}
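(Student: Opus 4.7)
The plan is to reduce the statement to a known compatibility for pseudo-isotopies of $A_\infty$ algebras, then transfer it through the exponential change of coordinates. Suppose we have composable paths $\phi_1$ from $u_a$ to $u_b$ and $\phi_2$ from $u_b$ to $u_c$ (all near the reference fibre $L_u$), with concatenation $\phi_2 \cdot \phi_1$. The claim I would prove is
\begin{align*}
\tilde{F}_{\phi_2 \cdot \phi_1} \;=\; \tilde{F}_{\phi_2} \circ \tilde{F}_{\phi_1}
\end{align*}
as algebra automorphisms of $\Lambda[[H_1(L_u)]]$, and it suffices by the definition (\ref{520}) to prove the same identity one level down for the Maurer-Cartan maps $F_\phi:H^1(L_u,\Lambda_+)\to H^1(L_u,\Lambda_+)$, since $z_k\mapsto \exp(\cdot)_k$ is a fixed bijective change of variable that turns compositions into compositions.

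The first step is to unpack how $F_\phi$ is constructed. The path $\phi$, together with the chosen $1$-parameter family of diffeomorphisms trivialising $L_{\phi(t)}\to L_u$, produces a $1$-parameter family $J_\phi$ of $\omega$-tamed almost complex structures on $X$, and this in turn gives a pseudo-isotopy $\mathcal{P}_\phi$ between the $A_\infty$ structures at the two endpoints, pulled back to $H^*(L_u,\Lambda)$ via the trivialisation. By the Fukaya-Oh-Ohta-Ono theory cited in the excerpt, $\mathcal{P}_\phi$ induces the isomorphism $F_\phi$ on Maurer-Cartan spaces. Next I would observe that the concatenation of paths yields the concatenation of the associated $1$-parameter families, and the concatenated family is (after a standard smoothing at the joining time) a pseudo-isotopy equivalent to the formal composition $\mathcal{P}_{\phi_2} * \mathcal{P}_{\phi_1}$. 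The standard homological algebra of pseudo-isotopies (as developed in \cite{F1}) then gives $F_{\phi_2 \cdot \phi_1} = F_{\phi_2} \circ F_{\phi_1}$ on Maurer-Cartan moduli.

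Two subtleties must be addressed. First, the concatenated family is only piecewise smooth at $t=1/2$; any two smoothings differ by a contractible loop of smoothing parameters, so by Lemma \ref{521} they induce the same map, making $F_{\phi_2\cdot\phi_1}$ well-defined and independent of the smoothing. Second, the pseudo-isotopy composition $\mathcal{P}_{\phi_2}*\mathcal{P}_{\phi_1}$ differs from the pseudo-isotopy produced by the concatenated family only by a further pseudo-isotopy of pseudo-isotopies — again contractible at the level of the induced maps — so they agree on Maurer-Cartan spaces. Finally I would pass back through the exponential substitution: since $\tilde{F}_\phi$ is obtained from $F_\phi$ by composing with the fixed coordinate change $b \leftrightarrow (z_1,z_2)$ on source and target, the composition identity for $F$ lifts verbatim to the composition identity for $\tilde{F}$, proving the lemma.

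The main obstacle, and the only place something nontrivial enters, is the identification of the pseudo-isotopy arising from the concatenated family of almost complex structures with the algebraic composition of the two individual pseudo-isotopies, up to a pseudo-isotopy of pseudo-isotopies. This is a routine but notationally heavy application of the Kuranishi/obstruction machinery of \cite{FOOO}\cite{F1}; I would simply quote it rather than reproduce the argument, since it is independent of the hyperK\"ahler geometry at hand.
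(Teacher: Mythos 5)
Your proposal is correct and follows essentially the same route as the paper: the lemma is reduced to the composition identity $F_{\phi_2\circ\phi_1}=F_{\phi_2}\circ F_{\phi_1}$ at the Maurer--Cartan level (which the paper simply takes as given from the pseudo-isotopy formalism of \cite{F1}, while you supply the standard concatenation/smoothing justification), and then transferred to $\tilde{F}_{\phi}$ through the exponential substitution exactly as in the paper's displayed computation.
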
    
      \begin{proof}
        Let $p_i:b\mapsto x_i$, then (\ref{520}) is equivalent to 
          \begin{align*}
              \tilde{F}_{\phi}(\exp{(p_i(b))})=\exp{(p_iF_{\phi}(b))}.
          \end{align*} Then we have that 
          \begin{align*}
             \tilde{F}_{\phi_2\circ \phi_1}(z_i)&=\exp{(p_i F_{\phi_2\circ\phi_1}(b))}  \\
             &=\exp{\big(p_iF_{\phi_2}(F_{\phi_1}(b))\big)}\\
             &=\tilde{F}_{\phi_2}\big(\exp{(p_iF_{\phi_1}(b))}\big)\\
             &=\tilde{F}_{\phi_2}\big(F_{\phi_2}(\exp{(p_i(b))})\big)=\tilde{F}_{\phi_2}\circ \tilde{F}_{\phi_1}(z_i). 
          \end{align*}
      \end{proof}  Together with Lemma \ref{521}, we have that $\tilde{F}_{\phi}$ also only depends on the homotopy class of $\phi$.     
      
   Now assume that the symplectic manifold $X$ is actually hyperK\"ahler with respect to $(\omega,\Omega)$ and admits a holomorphic Lagrangian fibration $p:X\rightarrow B$. From the discussion in previous section, $X_{\vartheta}\rightarrow B$ is a special Lagrangian fibration. Denote $L_u$ to be the smooth fibre over $u\in B_0$. Assume that $L_u$ bounds a holomorphic discs in $X_{\vartheta}$ in the relative class $\gamma\in H_2(X,L_u)$, then we have 
      \begin{align*}
         Z_{\gamma}(u)=e^{i(\vartheta-\frac{\pi}{2})}\int_{\gamma}\omega_{\vartheta} \mbox{ or } \mbox{Arg}Z_{\gamma}(u)=\vartheta-\frac{\pi}{2}.
     \end{align*} Since $dZ_{\gamma}(u)\neq 0$, this gives a real codimension one topological constraint to the existence of holomorphic discs. Explicitly, let $\gamma_i\in H_2(X,L_u)$ such that $\partial\gamma_i$ forms a basis of $H_1(L_u)$, then 
       \begin{align*}
          0=\mbox{Re}(e^{-i\vartheta}Z_{\gamma}(u))=a_1f_1(u)+a_2f_2(u)+\int_{\gamma_0}\mbox{Re}(e^{-i\vartheta}\Omega).
       \end{align*} if $\gamma=a_1\gamma_1+a_2\gamma_2+\gamma_0$ for some $\gamma_0\in H_2(X)$. In other words, if $\phi(t)$ is a path on $B_0$ such that $L_{\phi(t)}$ bounds holomorphic discs in $X_{\vartheta}$, then $\phi(t)$ falls in an affine line, which we will denote it by $l_{\gamma}$, in $B_0$. The affine lines $l_{\gamma}$ are the building blocks of the tropical discs. On the other hand, this implies that a generic fibre $L_u$ does not bound any holomorphic discs.
       
       The Cauchy-Riemann equation for $Z$ implies that the affine line is also characterized as the solution of the gradient flow lines (with respect to a K\"ahler metric on $B$)
        \begin{align*}
          & \frac{d}{ds}\phi(s)=-\nabla|Z_{\gamma}(\phi(s))|^2, \\
          & \mbox{Arg}Z_{\gamma}(\phi(s_0))=\vartheta.
        \end{align*}

\subsection{Open Gromov-Witten Invariants on K3 Surfaces}
   We will follow the idea of family Floer homology \cite{F5}\cite{F6} (see also \cite{T4}) and review the construction of the open Gromov-Witten invariants on elliptic K3 surface defined in \cite{L8}. Let $X\rightarrow B$ be an elliptic K3 surface and let $L_u$ denote the fibre over $u\in B$. 
   
   Given $u\in B_0$ and $\gamma\in H_2(X,L_u)$, we denote $W'_{\gamma}$ to be the subset of $B_0$ collecting those $u\in B_0$ such that there exist $\gamma_1,\gamma_2\in H_2(X,L_u)$ such that 
   \begin{enumerate}
       \item   $\langle \gamma_1,\gamma_2\rangle \neq 0$,
       \item  $\gamma=\gamma_1+\gamma_2$, and
       \item   $\mathcal{M}_{\gamma_i}(X_{\vartheta},L_u)\neq 0$ for the same $\vartheta\in S^1$.
    
   \end{enumerate}
   Now let $\gamma\in H_2(X,L_u)$ be a 
   primitive class with $\vartheta=\mbox{Arg}Z_{\gamma}(u)$ and $u\notin W'_{\gamma}$, there exists an affine line $l_{\gamma}$ (with respect to the complex affine structure of $X_{\vartheta}$) passing through $u$ such that $\mbox{Arg}Z_{\gamma}$ is constant along $l_{\gamma}$. Choose a sequence of nested simply connected neighborhood $ U_{i+1}\subseteq \bar{U}_{i+1}\subseteq U_i$ of $u$ and a sequence of pair of points $u_i^{\pm}\in U_i$
     \begin{enumerate}
        \item $\cap_i U_i=\{u\}$.
        \item $Z_{\gamma}(u^+_i)>Z_{\gamma}(u_i)>Z_{\gamma}(u^-_i)$.
        \item $L_{u_i^{\pm}}$ do not bound any holomorphic discs in $X$.
     \end{enumerate} Choose a path $\phi_i$ in $U_i$ from $u^-_i$ to $u_i^+$. Applying the Fukaya's trick, we get an isomorphism 
          \begin{align*}
        F_{u,\phi_i}:H^1(L_u,\Lambda_+)\rightarrow H^1(L_u,\Lambda_+),
          \end{align*}      
    which is independent of the choice of the paths $\phi_i$ by Lemma \ref{521}.        
    Given each $U_i$, there are only finitely $l_{\gamma'}$ that have non-trivial intersection with $U_i$. Together with the assumption $\cap_i U_i=\{u\}$, the limit $F_{u}:=\lim_{i\rightarrow \infty}F_{u,\phi_i}$ exists.       
          
   The explicit form of (\ref{520}) can be computed 
   \begin{thm}
   (Theorem 6.15\cite{L8}) The transformation $\tilde{F}_{u}$ is of the form
     \begin{align} \label{998}
         \tilde{F}_u:z^{\partial\gamma'}\mapsto z^{\partial \gamma'}f_{\gamma}(u)^{\langle \gamma',\gamma\rangle}, 
     \end{align} for some power series $f_{\gamma}(u)\in 1+\Lambda_+[[z^{\partial\gamma}]]$. Here $\langle \gamma',\gamma\rangle$ denotes the intersection pairing of the corresponding boundary classes.
   \end{thm}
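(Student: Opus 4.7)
The plan is to identify which holomorphic disc classes contribute to the limiting transformation $\tilde F_u$ and then compute the exponentiated effect explicitly. First I would argue, using the nested shrinking $U_i\to\{u\}$ and the fact that only finitely many walls $l_{\gamma'}$ meet each $U_i$, that the walls contributing in the limit are those with $\mbox{Arg}\,Z_{\gamma'}(u)=\vartheta$ that actually support a non-trivial moduli space of discs on $L_u$. The assumption $u\notin W'_\gamma$ together with primitivity of $\gamma$ is precisely arranged so that any such $\gamma'$ must be a positive integer multiple $k\gamma$: if it were not, one could extract a decomposition $\gamma=\gamma_1+\gamma_2$ with $\langle\gamma_1,\gamma_2\rangle\neq 0$ (either directly, or through a bubble tree on the boundary of the Gromov compactification), contradicting $u\notin W'_\gamma$.

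Having reduced to relative classes $k\gamma$ with $k\geq 1$, I would compute $F_{u,\phi_i}$ via the Fukaya trick. Along a short path $\phi_i$ crossing $l_\gamma$ once, the induced pseudo-isotopy of $A_\infty$-algebras on $L_u$ yields a gauge transformation on the Maurer--Cartan space whose infinitesimal generator is a vector field of the form
\begin{equation*}
X_\gamma \;=\; \Bigl(\sum_{k\geq 1} c_k\, T^{k\omega(\gamma)}\, z^{k\partial\gamma}\Bigr)\,\partial\gamma,
\end{equation*}
where the coefficients $c_k$ encode the open Gromov--Witten contributions of discs in class $k\gamma$ and $\partial\gamma\in H_1(L_u,\mathbb{Z})$ is viewed as a constant vector field on $H^1(L_u,\Lambda_+)$. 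The structural point is that the direction of $X_\gamma$ is $\partial\gamma$, forced by the fact that discs in class $k\gamma$ have boundary $k\partial\gamma$ and the $m_1$-type contribution to the deformed Maurer--Cartan equation is proportional to this boundary class.

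I would then exponentiate $X_\gamma$ on the algebra $\Lambda[[H_1(L_u)]]$. The action of the constant vector field $\partial\gamma$ on the monomial $z^{\partial\gamma'}$ is multiplication by $\langle\partial\gamma',\partial\gamma\rangle$, and the scalar coefficient of $X_\gamma$ is a power series in $z^{\partial\gamma}$ that commutes with this action; the flow therefore integrates cleanly to
\begin{equation*}
\tilde F_u(z^{\partial\gamma'}) \;=\; z^{\partial\gamma'}\exp\!\Bigl(\langle\gamma',\gamma\rangle\,\Phi_\gamma(z)\Bigr) \;=\; z^{\partial\gamma'}\, f_\gamma(u)^{\langle\gamma',\gamma\rangle},
\end{equation*}
with $f_\gamma(u):=\exp\Phi_\gamma(z)\in 1+\Lambda_+[[z^{\partial\gamma}]]$. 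The normalization $f_\gamma\in 1+\Lambda_+[[z^{\partial\gamma}]]$ reflects that every contributing disc has strictly positive symplectic area, so each term carries strictly positive $T$-adic valuation.

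The main obstacle is the reduction step. Controlling the limit of $F_{u,\phi_i}$ requires ruling out not only direct wall contributions from $\gamma'\neq k\gamma$ but also boundary strata of $\overline{\mathcal M}_{k\gamma}$ where a disc breaks into pieces in distinct classes $\gamma_1,\gamma_2$ with $\langle\gamma_1,\gamma_2\rangle\neq 0$. The hypothesis $u\notin W'_\gamma$ is tailored exactly to exclude these, but converting it into a statement about the limiting transformation needs transversality input and a careful bookkeeping of the Gromov compactification strata, together with the fact that the relevant obstruction classes for coincident phases vanish generically on $U_i$ for $i$ sufficiently large. Once this reduction is in place, the remaining computation is formal and follows the Kontsevich--Soibelman template for wall-crossing automorphisms.
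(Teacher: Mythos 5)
First, a point of comparison: the paper does not actually prove this statement --- it is imported verbatim as Theorem 6.15 of \cite{L8}, so there is no in-text argument to measure your proposal against. Your sketch does reconstruct the expected architecture (localize the contributing walls by shrinking the $U_i$, identify the pseudo-isotopy with a gauge flow generated by a vector field in the direction $\partial\gamma$, exponentiate to obtain the Kontsevich--Soibelman form), and the final exponentiation step together with the positivity argument giving $f_\gamma(u)\in 1+\Lambda_+[[z^{\partial\gamma}]]$ are correct and routine.

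Two steps are genuinely gapped. (1) Your reduction claims that $u\notin W'_\gamma$ plus primitivity forces every contributing class to be a positive multiple of $\gamma$. But $W'_\gamma$ only excludes splittings $\gamma=\gamma_1+\gamma_2$ with $\langle\gamma_1,\gamma_2\rangle\neq 0$ and coincident phase; it says nothing about an unrelated class $\gamma''$, not proportional to $\gamma$ and not appearing in any decomposition of $\gamma$, whose wall $l_{\gamma''}$ happens to pass through $u$. Such a class would contribute a factor involving $z^{\partial\gamma''}$ and destroy the asserted form, and your ``extract a decomposition of $\gamma$'' argument does not apply to it. Closing this requires either a genericity input on $u$ beyond $u\notin W'_\gamma$, or the energy-filtration argument: modulo $T^{\lambda}$ only finitely many walls meet $U_i$, and for $i$ large one must still show that the surviving walls through $u$ carry boundary classes proportional to $\partial\gamma$. (2) The ``structural point'' that the infinitesimal generator of the gauge transformation is $\bigl(\sum_{k\geq 1} c_k T^{k\omega(\gamma)}z^{k\partial\gamma}\bigr)\partial\gamma$ is not a formality: it is essentially the content of the theorem. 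Saying it is ``forced by the fact that discs in class $k\gamma$ have boundary $k\partial\gamma$'' begs the question of why the pseudo-isotopy deforms the Maurer--Cartan space by a translation flow whose direction is dual to the boundary class; this requires an actual analysis of the $A_\infty$ pseudo-isotopy (the degree of the boundary evaluation map $\mathcal{M}_{k\gamma}(X_\vartheta,L_u)\to L_u$ and a divisor-axiom-type identity for the weighted counts), which is where the Floer-theoretic work of \cite{L8} lives. As written, your argument comes close to assuming the wall-crossing transformation is of Kontsevich--Soibelman type in order to prove that it is.
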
 The theorem motivates the following definition. 
     \begin{definition} \label{1003} Let $X\rightarrow B$ be an elliptic K3 surface with a hyperK\"ahler pair $(\omega,\Omega)$\footnote{Notice that $X$ as a K3 surface already determined $\Omega$ up to a $\mathbb{C}^*$-scaling}. 
        Let $u\in B_0$, $\gamma\in H_2(X,L_u)$ primitive such that $\vartheta=\mbox{Arg}Z_{\gamma}+\frac{\pi}{2}$ and $u\notin W'_{\gamma}$, then the open Gromov-Witten invariant $\tilde{\Omega}(\gamma;u)$ is defined via 
       \begin{align*}
           \log{f_{\gamma}(u)}=\sum_{d\geq 1}d\tilde{\Omega}(d\gamma;u)(T^{\omega(\gamma)}z^{\partial\gamma})^d
       \end{align*} on $X_{\vartheta}$. 
     \end{definition}
 The following are some properties of the open Gromov-Witten invariants:
 \begin{thm} (Theorem 6.24 \cite{L8}) \label{1004}
    Under the same assumption in Definition \ref{1003}, 
     \begin{enumerate}
        \item $\tilde{\Omega}(\gamma;u)$ is independent of the choice of the Ricci-flat metric $\omega$. 
        \item (reality condition) $\tilde{\Omega}(-\gamma;u)=\tilde{\Omega}(\gamma;u)$
        \item If there exists a path connecting $u,u'\in B_0\backslash W'_{\gamma}$ does not intersect $W'_{\gamma}$, then $\tilde{\Omega}(\gamma;u)=\tilde{\Omega}(\gamma;u')$. In particular, $\tilde{\Omega}(\gamma;u)$ is locally constant in $u$ for $u$ away from $W'_{\gamma}$. 
        \item  If there exists a path connecting $u,u'\in B_0\backslash W'_{\gamma}$ and intersecting $W'_{\gamma}$ at a generic point $u_0$, then $\tilde{\Omega}(\gamma;u)=\tilde{\Omega}(\gamma;u')$ unless there exist $\gamma_i\in H_2(X,L_{u_0})$ such that $\gamma=\sum_i\gamma_i$ and $\tilde{\Omega}(\gamma_i;u_0)\neq 0$. 
     \end{enumerate}
 \end{thm}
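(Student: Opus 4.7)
The plan is to derive all four statements from the defining formula (\ref{998}) together with the characterization of $f_\gamma(u)$ as the ``$l_\gamma$-component'' of the limit transformation $\tilde{F}_u$ built out of Fukaya's trick. The common thread is that $\tilde{F}_u$ factors (up to a prescribed ordering) into a noncommutative product of elementary transformations $\mathcal{K}_{\gamma'}$ attached to each affine line $l_{\gamma'}$ passing arbitrarily close to $u$, and one reads off $f_\gamma(u)$ from the $\mathcal{K}_{\gamma}$-factor of this product.

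For statement (1), I would take a path $\omega_s$ of Ricci--flat K\"ahler forms (within the given Ricci--flat class, or more generally a path in the K\"ahler cone) while keeping $\Omega$ fixed. The accompanying family of compatible tamed almost complex structures induces a pseudo-isotopy of the $A_{\infty}$ algebras on the reference fibre $L_u$, and by the invariance of Maurer--Cartan moduli under pseudo-isotopy, the limit transformation $\tilde{F}_u$ is independent of $s$. Since Definition \ref{1003} extracts $\tilde{\Omega}(\gamma;u)$ algebraically from $f_\gamma(u)$, the invariance follows. Statement (2) is then a short consequence of the hyperK\"ahler rotation $\vartheta\mapsto \vartheta+\pi$, under which $\Omega_\vartheta\mapsto -\Omega_\vartheta$; this identifies the moduli of $J_\vartheta$-holomorphic discs in class $\gamma$ with that of $J_{\vartheta+\pi}$-holomorphic discs in class $-\gamma$ via orientation reversal, and leaves the Fukaya trick transformation formally unchanged, so $f_{-\gamma}=f_\gamma$.

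For statement (3), the key input is Lemma \ref{521} (contractible loops act trivially). If $u$ and $u'$ lie in the same connected component of $B_0\setminus W'_\gamma$ and are joined by a short path, one can arrange the nested neighborhoods $U_i$ so that the set of affine lines $l_{\gamma'}$ meeting $U_i$ is the same for both points. Because $\tilde{F}_u$ depends only on the homotopy class of the encircling path (by Lemma \ref{521} plus the composition law established above), the factorizations of $\tilde{F}_u$ and $\tilde{F}_{u'}$ differ only by a relabeling of the base point, and in particular the $\mathcal{K}_\gamma$-factor is preserved. Extracting $\tilde{\Omega}(d\gamma;u)$ from $\log f_\gamma$ then yields the local constancy.

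The main obstacle is statement (4), the wall-crossing across a generic point $u_0\in W'_\gamma$. Here I would compare the factorizations of $\tilde{F}_{u^+}$ and $\tilde{F}_{u^-}$ on two sides of the wall by transporting both to a common base point via short paths around $u_0$, and use the composition law in the form of a Kontsevich--Soibelman identity. The point is that any ``new'' factor introduced by this reordering must sit on an affine line $l_{\gamma_i}$ passing through $u_0$ whose $\mathcal{K}_{\gamma_i}$ is nontrivial, i.e.\ $\tilde{\Omega}(\gamma_i;u_0)\neq 0$; if no decomposition $\gamma=\sum_i\gamma_i$ of this kind exists, the trivial $\mathcal{K}_{\gamma_i}$'s commute freely through and the $\mathcal{K}_\gamma$-factor is unchanged, giving $\tilde{\Omega}(\gamma;u)=\tilde{\Omega}(\gamma;u')$. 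The delicate piece is controlling the boundary strata of the moduli space in class $\gamma$ as $u\to u_0$ to ensure that every possible degeneration is really indexed by a splitting $\gamma=\sum_i\gamma_i$ with each $\tilde{\Omega}(\gamma_i;u_0)$ nonzero; this requires the bubbling/gluing analysis of holomorphic discs in the family together with the definition of $W'_\gamma$ using the pairing $\langle\gamma_1,\gamma_2\rangle\neq 0$, and is where the bulk of the work lies.
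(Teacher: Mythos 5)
The paper quotes this result as Theorem 6.24 of \cite{L8} and supplies no proof of its own, so there is no in-paper argument to compare against; your sketch reproduces the strategy of the cited source. In particular, pseudo-isotopy invariance of the Maurer--Cartan transformations for (1), the rotation $\vartheta\mapsto\vartheta+\pi$ for (2), homotopy invariance of the Fukaya-trick transformations combined with the definition of $W'_{\gamma}$ for (3), and the Kontsevich--Soibelman reordering of the $\mathcal{K}_{\gamma_i}$-factors for (4) are exactly the right ingredients, and you correctly locate the substantive analytic work in the compactness/bubbling analysis underlying the wall-crossing statement.
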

The last part of the Theorem \ref{1004} gives a hint of the connection of the open Gromov-Witten invariants $\tilde{\Omega}(\gamma;u)$ and tropical geometry, which we will discuss in the Section \ref{1005}. 

\begin{rmk}
	There is another definition of the open Gromov-Witten invariants under the same setting by the author via the "first principle" \cite{L4}. It is later proved in Theorem 6.29 \cite{L8} that the two definitions coincide. In particular, the open Gromov-Witten invariants defined here also have the usual enumerative meaning, the virtual counting of the number of holomorphic discs with special Lagrangian boundary conditions. Here we choose the definition for the purpose of later proofs in the paper. 
\end{rmk}

\subsection{Local Model of Type $I_1$-Singular Fibres}

The Ooguri-Vafa space $X_{OV}$ is an elliptic fibration over a disc $D$ with a type $I_1$ singular fibre over $0\in D$. There exists an $S^1$-action preserving the complex structure. Ooguri-Vafa \cite{OV} constructed an $S^1$-invariant Ricci-flat metric $\omega_{OV}$ on $X_{OV}$ from periodic Gibbons-Hawkings ansatz and known as the Ooguri-Vafa metric. The complex affine structure induced from the special Lagrangian fibration in $(X_{OV})_{\vartheta}$ admits a singularity at the origin. The monodromy of the affine structure is conjugate to $\bigg(\begin{matrix} 1 &1 \\ 0& 1 \end{matrix}\bigg)$ from Picard-Lefschetz formula.

%The affine structure can be identified with a neighborhood of the following one around $0$. Consider the topological space $\mathbb{R}^2_{(x_1,x_2)}$ with the coordinates around $\{x_2>0\}$ replaced by $(x_1,\mbox{min}\{x_1,0\}+x_2)$\footnote{Notice that in particular the smooth structure is also changed.}(see Figure{?}). We will denote this affine neighborhood by $U_{std}$.

 Let $u\in D$ and $\gamma_e$ be the generator of $H_2(X_{OV},L_u)\cong \mathbb{Z}$. There are  two rays $l_{\pm \gamma_e}$ emanating from the singularity such that the tangents are in the monodromy invariant direction. The two rays $l_{\pm\gamma_e}$ are distinguished in symplectic geometry for the following reason:  the fiber $L_u$ bounds a holomorphic disc in $(X_{OV})_{\vartheta}$ if and only if $u\in l_{\pm\gamma_e}$. Moreover, there is a unique (up to orientation) simple holomorphic disc with boundary on $L_u$ which is the union of vanishing cycles from the singularity along $l_{\gamma_e}$(or $l_{-\gamma_e}$) to $u$ \cite{C}. As $\vartheta$ moves around $S^1$, the ray $l_{\gamma}$ (and $l_{-\gamma}$) rotates counterclockwisely and every point is swept once. In other words, every fibre $L_u$ bounds exactly one simple holomorphic disc of relative class $\gamma$ (and $-\gamma$) in $X_{\vartheta}$ (in $X_{-\vartheta}$) for some $\vartheta\in S^1$. Moreover, the open Gromov-Witten invariants of $X_{OV}$ is calculated \cite{L4}\cite{L8}
  \begin{align*}
      \tilde{\Omega}_{I_1}(\gamma;u)=\begin{cases}
                  \frac{(-1)^{d-1}}{d^2},& \mbox{ if $\gamma=d\gamma_e$}, \\
                  0, &\mbox{otherwise.} 
              \end{cases}
  \end{align*} 
 
 Given an elliptic K3 surface $X$, the tubular neighborhood of a type $I_1$ singular fibre is modeled by the Ooguri-Vafa space. When the K\"ahler class $[\omega]$ is chosen such that $\int_{L_u}[\omega]$ is small enough, then the Ricci-flat metric is $C^0$ close to the Ooguri-Vafa metric. This allows one to use cobordism argument to compute some open Gromov-Witten invariants of $X$ near a type $I_1$ singular fibre\footnote{Notice that the open Gromov-Witten invariant $\tilde{\Omega}(\gamma;u)$ is independent of the choice of the Ricci-flat metric.}.  
   \begin{thm} (Theorem 4.44 \cite{L4})
      Let $u_0\in \Delta$ corresponds to a type $I_1$ singular fibre. Let $\gamma_e$ denote the relative class of the Lefschetz thimble. Then for each $d\in \mathbb{Z}$, there exists an open neighborhood $\mathcal{U}_d\in B$ of $u_0$ such that 
         \begin{align*}
              \tilde{\Omega}(\gamma;u)=\begin{cases}
                              \frac{(-1)^{d-1}}{d^2},& \mbox{ if $\gamma=d'\gamma_e$, $|d'|<d$}, \\
                              0, &\mbox{otherwise,}
                              \end{cases} 
         \end{align*}if $u\in \mathcal{U}_d$ and $|Z_{\gamma}(u)|<d|Z_{\gamma_e}(u)|$.
   \end{thm}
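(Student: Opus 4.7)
The plan is to transport the Ooguri-Vafa computation $\tilde{\Omega}_{I_1}(d\gamma_e;u)=(-1)^{d-1}/d^2$ to $X$ by a cobordism argument that exploits two facts already recorded: $\tilde{\Omega}(\gamma;u)$ is independent of the choice of Ricci-flat metric (Theorem \ref{1004}(1)), and on a tubular neighborhood $V$ of the type $I_1$ singular fibre the Ricci-flat metric of $X$ is $C^0$-close to the Ooguri-Vafa metric once the fibre area $\epsilon$ of $[\omega]$ is small. First I would rescale $[\omega]$ so that $\epsilon$ is as small as desired and fix such a $V$.

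Next I would choose $\mathcal{U}_d\subset B$ small enough that for every $u\in\mathcal{U}_d$ and every relative class $\gamma$ satisfying $|Z_{\gamma}(u)|<d|Z_{\gamma_e}(u)|$, any holomorphic disc with boundary on $L_u$ in class $\gamma$ is confined to a fixed compact subset of $V$ by the area bound $\omega(\gamma)=|Z_{\gamma}(u)|$. Two consequences then hold inside $\mathcal{U}_d$. First, any $\gamma=d'\gamma_e+\gamma_0$ with $0\neq\gamma_0\in H_2(X)$ is excluded, because $|\omega(\gamma_0)|$ is bounded below by the ambient K3 geometry independently of $\epsilon$, while $|Z_{\gamma}(u)|\to 0$ as $u\to u_0$. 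Second, only classes of the form $d'\gamma_e$ with $|d'|<d$ can contribute.

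For these remaining classes I would interpolate between the K3 complex structure restricted to $V$ and the Ooguri-Vafa complex structure through a $1$-parameter family of tamed almost complex structures and invoke Fukaya's trick. The transformations $\tilde{F}_u$ and the power series $f_{\gamma}(u)$ appearing in (\ref{998}) are cobordism invariants of such a family, giving $f^{X}_{d'\gamma_e}(u)=f^{OV}_{d'\gamma_e}(u)$ for every $|d'|<d$. Feeding the known Ooguri-Vafa value into Definition \ref{1003} yields the asserted formula for $\tilde{\Omega}(d'\gamma_e;u)$, and the metric-independence afforded by Theorem \ref{1004}(1) then removes the smallness assumption on $\epsilon$.

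The main obstacle is ensuring that no bubbling escapes $V$ during the cobordism. Sphere bubbling is ruled out because any holomorphic sphere in $V$ is a component of the singular fibre, whose total symplectic area is bounded below by $\epsilon$, yet above by the energy budget $d|Z_{\gamma_e}(u)|$, which is strictly smaller for $u\in\mathcal{U}_d$ sufficiently close to $u_0$. Disc bubbling into classes $d''\gamma_e$ with $|d''|<|d'|<d$ stays inside $V$ and is absorbed into the recursive structure of $\log f_{\gamma}$ in Definition \ref{1003}, contributing only the expected multi-cover correction. Once these exclusions are in place, parametrized transversality gives the class-by-class equality of $\tilde{\Omega}(d'\gamma_e;u)$ with the Ooguri-Vafa values.
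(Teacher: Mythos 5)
Your proposal follows essentially the same route as the paper, which defers the detailed proof to Theorem 4.44 of \cite{L4} but outlines exactly this strategy: a cobordism to the Ooguri--Vafa local model using the $C^0$-closeness of the Ricci-flat metric for small fibre area, confinement of low-energy discs to the tubular neighbourhood so that only classes $d'\gamma_e$ can contribute, and the metric-independence of $\tilde{\Omega}(\gamma;u)$ from Theorem \ref{1004}(1) to remove the smallness assumption on $\int_{L_u}[\omega]$. The one imprecision is that ruling out classes $d'\gamma_e+\gamma_0$ with $0\neq\gamma_0\in H_2(X)$ should rest on the confinement argument (a confined disc has class in $H_2(X_U,L_u)\cong\mathbb{Z}\gamma_e$) rather than on a uniform lower bound for $|\omega(\gamma_0)|$, which need not hold for an arbitrary class; since you also invoke confinement, the argument still goes through.
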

\begin{rmk}
  Sometimes it is more convenient to formally absorb the contribution of the multiple cover by setting 
      \begin{align} \label{667}
                  \Omega(d\gamma;u)=-\sum_{k|d}c(\gamma;u)^{\frac{d}{k}}\mu(k)\frac{\tilde{\Omega}(\frac{d}{k}\gamma;u)}{k^2}.
      \end{align} Here $c:H_2(X,L_u)\rightarrow \{\pm 1\}$ is the quadratic refinement satisfying 
      \begin{align*}
         c(\gamma_1+\gamma_2;u)=(-1)^{\langle \gamma_1,\gamma_2\rangle}c(\gamma_1;u)c(\gamma_2;u)
             \end{align*} and $c(\gamma;u)=-1$ if $\gamma$ is the parallel transport of Lefschetz thimble of a type $I_1$ singular fibre to $u$. The function $\mu$ is the Mobius function. 
\end{rmk}

\section{Tropical Geometry of K3 Surfaces with only Type $I_1$ Singular Fibres} \label{1005}

\begin{definition} \label{914} Fix $\vartheta\in S^1$ generic.
   A tropical disc of $X_{\vartheta}$ ends at $u\in B_0$ is a $3$-tuple $(\phi,T,w)$ satisfying the following properties\footnote{Here we avoid the situation when the tropical disc has an edge contracted to the singularity by considering the affine structure of $B_{\vartheta}$ for a generic $\vartheta$. We refer the readers to Section 4 \cite{L8} for the general case.}:
    \begin{enumerate}
       \item $T$ is a tree with a root $x$ with no valency two vertices.
       \item $\phi:T\rightarrow B$ such that 
          \begin{enumerate}
             \item $\phi(x)=u$ and all the other valency one vertices are mapped to $\Delta$. For all other points on $T$ are mapped into $B_0$.
             \item For each edge $e$, then $\phi(e)$ is either an immersion onto an affine line segment (with respect to the complex affine structure of $X_{\vartheta}$) in $B_0$ or $\phi(e)$ is a point. 
             \item If $e$ is an edge attached to a valency one vertex (corresponding to an $I_1$ singularity) other than $x$, then $\phi(e)$ is in the monodromy invariant direction.
          \end{enumerate}
       \item For each edge $e$, $w(e)\in \Gamma(\phi(e),T_{\mathbb{Z}}B)$ such that the balancing condition holds: let $v$ be a vertex with valency $k+1$, say $e_{out}$ is the edge closest to the root and $e_1,\cdots,e_k$ be the other edges adjacent to $v$. Then 
         \begin{align*}
             w_{e_{out}}(v)=\sum_i w_{e_i}(v).
         \end{align*}
    \end{enumerate}   
\end{definition}
It is a standard question in tropical geometry that given a tropical disc whether there exits a holomorphic disc in the corresponding relative class. The open Gromov-Witten invariants provide such a sufficient condition. 
\begin{thm}
(Theorem 6.25 \cite{L8}) Give $u\in B_0$ generic and let $\gamma\in H_2(X,L_u)$ such that $\tilde{\Omega}(\gamma;u)\neq 0$. Then there exists a tropical disc $(\phi,T,w)$ such that $[\phi]=\gamma$. 
\end{thm}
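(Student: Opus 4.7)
The plan is to induct on the symplectic area $\omega(\gamma) > 0$, using the wall-crossing behavior of the open Gromov-Witten invariants from Theorem \ref{1004}(4) together with the local computation near $I_1$ fibres to build the tree $T$ outwards from its root $x = u$.

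Starting from the generic point $u$, consider the affine line $l_\gamma$ through $u$ in the monodromy-invariant direction determined by $\gamma$; this line exists because $\tilde{\Omega}(\gamma;u) \neq 0$ forces $u \notin W'_\gamma$ and $\mbox{Arg}\, Z_\gamma(u) = \vartheta - \frac{\pi}{2}$. I would trace $l_\gamma$ backwards from $u$ and look for the first place the invariant $\tilde{\Omega}(\gamma;\cdot)$ can change. By Theorem \ref{1004}(3), on $B_0 \setminus W'_\gamma$ the invariant is locally constant, so the first change occurs either at a point of $\Delta$ or at a generic intersection of $l_\gamma$ with $W'_\gamma$.

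For the base case, if $l_\gamma$ extends back from $u$ to a point $p \in \Delta$ without crossing any wall, then the Ooguri-Vafa local computation recalled in Section 2.4 forces $\gamma = d \gamma_e$ for some multiple of a Lefschetz thimble $\gamma_e$ at $p$, and $l_\gamma$ lies in the monodromy-invariant direction at $p$. Setting $T$ to be the single edge connecting $p$ to $u$ with weight $w = \partial\gamma$ produces a tropical disc as in Definition \ref{914}. For the inductive step, if $l_\gamma$ meets $W'_\gamma$ at a generic $u_0$ before reaching $\Delta$, Theorem \ref{1004}(4) yields a decomposition $\gamma = \sum_{i=1}^{k} \gamma_i$ with $k \geq 2$ and $\tilde{\Omega}(\gamma_i; u_0) \neq 0$. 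Since each $\gamma_i$ is represented by a holomorphic disc of positive symplectic area in some $X_{\vartheta'}$, we have $\omega(\gamma_i) < \omega(\gamma)$, and the induction hypothesis supplies tropical discs $(\phi_i, T_i, w_i)$ ending at $u_0$ with $[\phi_i] = \gamma_i$. Glue the $T_i$ along their roots at a common vertex $v$ of valency $k+1$ and attach one new outgoing edge from $v$ to $u$, mapped by $\phi$ onto the segment of $l_\gamma$ from $u_0$ to $u$ with weight $\partial \gamma$. The balancing condition at $v$ reads $\partial\gamma = \sum_i \partial \gamma_i$, which is the boundary of the relation $\gamma = \sum_i \gamma_i$, while $k \geq 2$ ensures no valency-two vertex is created.

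The hard part will be controlling termination and genericity. Strict area decrease $\omega(\gamma_i) < \omega(\gamma)$ gives a well-founded recursion on the discrete set of effective classes, and only finitely many walls enter any compact neighborhood of $u$, so the tracing terminates. A subtler technical point is that the inductive assumption requires each $u_0$ to be generic also with respect to the walls $W'_{\gamma_i}$; one must perturb $u_0$ slightly along $W'_\gamma$ if necessary and then invoke Theorem \ref{1004}(3) to transport the resulting sub-discs back without changing their classes. One should also verify that the perturbation can be made compatible with the affine-line condition in Definition \ref{914}(2.b), which reduces to transversality of $l_\gamma$ to $W'_\gamma$ at the generic intersection point.
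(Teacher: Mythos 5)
This paper only quotes the statement from \cite{L8} without reproving it, but your argument follows essentially the same strategy as the proof there: trace the affine line $l_\gamma$ (the gradient flow line of $|Z_\gamma|^2$) back from $u$, use Theorem \ref{1004}(3)--(4) to locate where the invariant is created, decompose and recurse at a generic wall crossing, with the Ooguri--Vafa local model supplying the base case at an $I_1$ point and Gromov compactness guaranteeing termination. One small correction: the quantity that strictly decreases and controls the induction is the $\omega_\vartheta$-area $|Z_\gamma(u)|$ (the central charges of the $\gamma_i$ align at a point of $W'_\gamma$, so $|Z_\gamma(u_0)|=\sum_i |Z_{\gamma_i}(u_0)|$ with each summand positive), not $\int_\gamma \omega$ for the fixed K\"ahler form, which actually vanishes on any class represented by a $J_\vartheta$-holomorphic disc with boundary on the $\Omega_\vartheta$-Lagrangian fibre.
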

This gives a sufficient condition for a tropical discs to be lifted to a holomorphic disc. On In other words,
this can be viewed as a weaker version of the correspondence theorem between tropical discs and holomorphic discs.

To further match each counting of holomorphic discs and tropical discs, we first need to associate each tropical disc a relative class. This can be defined by induction of the number of vertices of the tropical discs. If the tropical disc $(\phi,T,w)$ of $X_{\vartheta}$ has only one vertex except the root $x$, then the vertex is mapped to a singularity $u_0$ and the unique edge is mapped to an affine line segment from $u_0$ to $u=\phi(x)$ in the monodromy invariant direction. We set the relative class $[\phi]$ associated to $(\phi,T,w)$ to be the Lefschetz thimble $\gamma_e$ of the singular fibre $L_{u_0}$ (with the orientation such that $\int_{\gamma_e}\omega_{\vartheta}>0$). Assume that we already defined the relative classes of all tropical discs with number of vertices less than $k$ and $(\phi,T,w)$ is a tropical disc with $k$ vertices. Let $e$ be the edge of $T$ adjacent to the root $x$ and $v$ is the other vertex. Then one will get sub tropical discs $(\phi_i,T_i,w_i)$ with stop at $\phi(v)$ be deleting the edge $e$. By induction hypothesis, we already define $[\phi_i]\in H_2(X,L_{\phi(v)})$ for each $(\phi_i,T_i,w_i)$. We will define the relative class $[\phi]\in H_2(X,L_{\phi(x)})$ to be the parallel transport of $\sum_i [\phi_i]$ along $\phi(e)$. 
 To match the counting of tropical discs with the open Gromov-Witten invariants, we need to associate weights to tropical discs with each vertex which has valency less than $4$. 
\begin{definition} Let $(\phi, T, w)$ be a tropical disc with stop at $u\in B_0$ and each vertex of $T$ the valency is at most $3$. Then we define the associate weight to be
   \begin{align} \label{4029}
            \mbox{Mult}(\phi):=\prod_{\substack{v\in C^{int}_0(T)\\ v: trivalent}}\mbox{Mult}_v(\phi)\prod_{v\in C^{ext}_0(T)\backslash \{u\}}\frac{(-1)^{w_v-1}}{w_v^2}  \prod_{T_e: \phi(e) \mbox{is a point}}|\mbox{Aut}(\bold{w}_{T(e)})|^{-1},
        \end{align} where the notation is explained below:
        \begin{enumerate}
           \item Let $e\in C_1(T)$ be a contracted edge and $T_e$ is the connected subtree of $T$ containing $e$.  Let $e_0,e_1,\cdots,e_m \in C_1(T)$ be the edges adjacent to $T_e$ and $e_0$ is the one closest to the root. Denote the weight of $e_i$ by $w_i$. Assume that there are $n$ possible direction of $w(e_i)$ and $w_{ij}$ denotes the number of $e_s$ such that $w(e_s)$ in $i$-th direction and with divisibility $j$.
             Then we set $\bold{w}_{T_e}=(\bold{w}_1,\cdots,\bold{w}_n)$, where $\bold{w}_i=(w_{i1},\cdots,w_{il_i})$. The last product factor in (\ref{4029}) doesn't repeat the factor if $T_e=T_{e'}$. 
           \item For a set of weight vectors $\bold{w}=(\bold{w}_1,\cdots, \bold{w}_n)$ and $\bold{w}_i=(w_{i1},\cdots, w_{il_i})$, for $i=1,\cdots, n$. We set 
             \begin{align*}
                a^i_n=|\{w_{ij}|w_{ij}=n\}|
             \end{align*} and 
                       \begin{align*}
                          b_l=\#\{i|Z_{[\phi_i]}(\phi(e))\in l\}
                       \end{align*} for any ray $l\in \mathbb{C}^*$. Then we define
             \begin{align*}
               |\mbox{Aut}(\bold{w})|=\bigg(\prod_{i}\prod_{n\in \mathbb{N} \atop a^i_n\neq 0} (a^i_n)! \bigg)\prod_{l} (b_l)!,
             \end{align*} where the first factor is the size of the subgroup of the permutation group $\prod_i
             \Sigma_{l_i}$ stabilizing $\bold{w}$.
        \end{enumerate}
\end{definition}

However, to avoid the overcount of tropical discs, one only counts the admissible tropical discs which only allow valency one and three together with some technical conditions with weights. We will refer the readers for the definition of the definition of admissible tropical discs to \cite{L8}.
\begin{definition}
   Given $u\in B_0$ generic and $\gamma\in H_2(X,L_u)$. Then the corresponding weighted count of tropical discs is defined to be
     \begin{align*}
         \tilde{\Omega}^{trop}(\gamma;u)=\sum_{(\phi,T,w):[\phi]=\gamma}\mbox{Mult}(\phi),
     \end{align*} where the summation is over all admissible tropical discs $(\phi,T,w)$ with respect to the affine structure $B_{\mbox{Arg}Z_{\gamma}(u)-\pi/2}$. 
\end{definition}
 With the correct definition, one can match the open Gromov-Witten invariants with the tropical counts.
\begin{thm}
\cite{L8} Let $X$ be an elliptic K3 surface with only type $I_1$ singular fibres. Let $u\in B_0$ be generic and $\gamma \in H_2(X,L_u)$. Then the open Gromov-Witten invariants are captured by the weighted count of tropical discs. Namely,
  \begin{align*}
     \tilde{\Omega}(\gamma;u)=\tilde{\Omega}^{trop}(\gamma;u).
   \end{align*}
\end{thm}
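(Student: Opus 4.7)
The plan is to prove the equality by induction on the symplectic area $\omega(\gamma)$, using the fact that both sides are locally constant on $B_0\setminus W'_\gamma$ (by Theorem \ref{1004}(3) on the left, and by construction on the right) and that both satisfy a wall-crossing formula of Kontsevich--Soibelman type as $u$ moves across walls.

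For the base case, fix a type $I_1$ singular fibre at $u_0$ with Lefschetz thimble $\gamma_e$. By the theorem of Section 2.4 (Theorem 4.44 of \cite{L4}), for $u$ near $u_0$ the only non-zero invariants are $\tilde{\Omega}(d\gamma_e;u) = (-1)^{d-1}/d^2$. On the tropical side, the unique admissible tropical disc of class $d\gamma_e$ ending at $u$ is the one-vertex disc with a single edge in the monodromy invariant direction, weighted by $(-1)^{d-1}/d^2$ via the external-vertex factor in (\ref{4029}). Hence both sides agree in the base case.

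For the inductive step, connect $u$ by a generic path to a point where the theorem is known, crossing walls $W'_{\gamma'}$ with $\omega(\gamma')\le \omega(\gamma)$ at finitely many generic points $u_1,\ldots,u_N$. Apply Lemma \ref{521} to a contractible loop enclosing the relevant walls: the composition of wall-crossing transformations equals the identity. Using formula (\ref{998}), this composition factors as a product of Kontsevich--Soibelman factors $K_{\gamma'}:z^{\partial\gamma''}\mapsto z^{\partial\gamma''}f_{\gamma'}^{\langle\gamma'',\gamma'\rangle}$, and the resulting identity determines the jump of $\tilde{\Omega}(\gamma;u)$ in terms of lower-area invariants. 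On the tropical side, by Theorem \ref{1004}(4) the corresponding jump in $\tilde{\Omega}^{trop}(\gamma;u)$ arises from attaching sub-trees $(\phi_j,T_j,w_j)$ with $\sum_j[\phi_j]=\gamma$ at a new vertex along $l_\gamma$ near $u_i$. By the inductive hypothesis applied to each $\tilde\Omega(\gamma_j;u_i)=\tilde\Omega^{trop}(\gamma_j;u_i)$, it suffices to check that the jumps on the two sides are combinatorially identical.

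The heart of the argument, and its main obstacle, is matching the algebraic combinatorics of the wall-crossing product $\prod K_{\gamma'}$ with the enumerative combinatorics of the tropical multiplicity (\ref{4029}). One expands $\log f_{\gamma}(u)$ from an ordered product of KS factors and verifies that the coefficient of $(T^{\omega(\gamma_0)}z^{\partial\gamma_0})^d$ equals $d\,\tilde{\Omega}^{trop}(d\gamma_0;u)$. Three combinatorial identifications must be made: first, the trivalent vertex multiplicity $\mbox{Mult}_v=|\langle\gamma_1,\gamma_2\rangle|$ has to arise from the commutator exponent in the KS formula; second, the symmetry factor $|\mbox{Aut}(\bold{w}_{T_e})|^{-1}$ for contracted edges must appear as the inverse multinomial coefficient that governs how several sub-discs in parallel directions may collide at a common vertex; and third, the Mobius inversion (\ref{667}) must reconcile $\Omega$ with $\tilde\Omega$ and absorb the external-vertex factor $(-1)^{d-1}/d^2$ into the multiple-cover contribution. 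Once these three matches are in place, the induction closes and the equality follows.
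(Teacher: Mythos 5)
Your overall strategy --- induction on symplectic area, locality of the invariants away from $W'_\gamma$, the base case from the Ooguri--Vafa computation near an $I_1$ fibre, and the identification of jumps with Kontsevich--Soibelman factors via Lemma \ref{521} applied to contractible loops --- is the right one and is essentially the route taken in \cite{L8} (this paper only quotes the result; the analogous generalization later in the paper is likewise referred back to Theorem 6.28 of \cite{L8}). The base case is handled correctly, and the use of Theorem \ref{1004}(3),(4) to localize the jumps is sound.

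However, there is a genuine gap: the entire combinatorial core of the theorem is listed as three ``identifications that must be made'' and then asserted to close the induction, without being carried out. Matching the coefficient extraction from the ordered product of automorphisms $z^{\partial\gamma''}\mapsto z^{\partial\gamma''}f_{\gamma'}^{\langle\gamma'',\gamma'\rangle}$ with the multiplicity formula (\ref{4029}) is precisely the hard part: one must (i) compute the commutator of two such factors and show the exponent reproduces the trivalent vertex multiplicity together with the correct treatment of non-primitive incoming weights, which in the Gross--Pandharipande--Siebert framework requires a perturbation/change-of-lattice argument to reduce to walls in general position; (ii) justify the symmetry factor $|\mbox{Aut}(\bold{w}_{T_e})|^{-1}$, which depends on the precise definition of \emph{admissible} tropical discs --- a definition you never engage with, and without which the claimed bijection between monomials in the expansion of $\log f_\gamma$ and tropical discs cannot even be set up (this is exactly what prevents overcounting when several sub-discs arrive at a vertex in parallel or colinear directions); and (iii) verify that the M\"obius inversion (\ref{667}) and the quadratic refinement $c(\gamma;u)$ are consistent with the external-vertex factor $(-1)^{w_v-1}/w_v^2$ across all multiple-cover contributions, not just at the base case. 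There is also a smaller slip: you invoke Theorem \ref{1004}(4) to control the jump of $\tilde{\Omega}^{trop}$, but that statement concerns the analytic invariant $\tilde{\Omega}$; the jump of the tropical count must be derived directly from the definition of admissible discs and the way new trivalent vertices appear on $l_\gamma$ as $u$ crosses a wall. As written, the proposal is a correct plan rather than a proof.
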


\section{Local Models of Other Singular Fibres}
  The following theorem is well-known among algebraic geometers and we include the proof just for self-containedness.
 \begin{thm} \label{35}
     Let $X$ be an elliptic K3 surface. There exists a deformation to $X'$, which is an elliptic K3 surface with only $I_1$-type singular fibres, via a path of elliptic K3 surfaces.
   \end{thm}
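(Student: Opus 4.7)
The plan is to exhibit the deformation explicitly using the Weierstrass presentation and a genericity argument in the space of coefficients. Recall that any elliptic K3 surface $X\to \mathbb{P}^1$ admitting a section is the minimal resolution of a Weierstrass model
\[
W(f,g):\quad y^2 = x^3 + f(t)x + g(t),
\]
where $(f,g)$ lies in the affine space $V := H^0(\mathbb{P}^1,\mathcal{O}(8))\oplus H^0(\mathbb{P}^1,\mathcal{O}(12))$ and $\Delta(f,g) = 4f^3 + 27g^2 \in H^0(\mathbb{P}^1,\mathcal{O}(24))$ is not identically zero. The Kodaira type of each singular fibre is controlled by the orders of vanishing of $f$, $g$ and $\Delta$ at the corresponding point, and only $I_1$ fibres appear precisely when $\Delta$ has $24$ pairwise distinct simple zeros and, at each such zero, $f$ and $g$ do not simultaneously vanish.

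First I would reduce to the case of an elliptic fibration with a section. For a general elliptic K3 surface $X\to \mathbb{P}^1$ I would pass to its relative Jacobian $J(X)\to \mathbb{P}^1$: this is again an elliptic K3 surface, now endowed with a section, and its singular fibres have the same Kodaira types as those of $X$. By Ogg--Shafarevich theory, deformations of $J(X)$ preserving the elliptic fibration correspond, up to twisting by a locally constant class in the Tate--Shafarevich group $\Sha(J(X)/\mathbb{P}^1)$, to deformations of $X$ preserving its elliptic fibration, with the same fibre types at each step. Hence it suffices to construct the required deformation for $J(X)$.

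Next I would analyse the bad locus $Z\subset V$ cut out by the failure of either of the two conditions above. Concretely, $Z$ is the union of the hypersurface $\{\Delta\equiv 0\}$ with the finite collection of hypersurfaces defined by the vanishing of the appropriate resultants between $f$, $g$ and the multiple factors of $\Delta$ in the coefficients of $(f,g)$. Each condition is Zariski closed in $V$, so $V\setminus Z$ is Zariski open, and it is non-empty since one can write down an explicit pair $(f_\ast,g_\ast)$ for which $\Delta$ has $24$ distinct simple zeros and $(f_\ast,g_\ast)$ avoid them. Starting from the pair $(f_0,g_0)$ representing $J(X)$ and fixing any target $(f_1,g_1)\in V\setminus Z$, I would connect them by a path $(f_t,g_t)\subset V$; since $V$ is an affine space of complex dimension $22$ and $Z$ has complex codimension at least one, a generic such path meets $Z$ only at the initial point $t=0$. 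The resulting family $\{W(f_t,g_t)\}_{t\in[0,1]}$ has only rational double point singularities on each fibre, and its simultaneous minimal resolution provides a path $\{X_t\}$ of smooth elliptic K3 surfaces with $X_0\cong J(X)$ and $X_1$ having only $I_1$ singular fibres.

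The step I expect to require the most care is the reduction from $X$ to $J(X)$ and back: one must verify that the deformation of $J(X)$ constructed above really lifts to a deformation of $X$ through elliptic K3 surfaces, rather than through an isotrivial family of $J$-torsors. This can be handled either by tracking the Brauer/Tate--Shafarevich class along the deformation, or, more cleanly, by working throughout inside the period domain for lattice-polarized K3 surfaces, polarizing by the class of the fibre (and in the non-section case by the minimal multisection as well), and invoking the connectedness of that period domain together with the density of the $I_1$-only locus established by the Weierstrass analysis above.
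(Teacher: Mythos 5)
Your proposal is correct and follows essentially the same route as the paper: reduce to the relative Jacobian (an elliptic K3 with a section, with the same fibre types), write it in Weierstrass form, and observe that the locus of coefficient pairs whose discriminant has $24$ distinct simple zeros is a dense Zariski-open set that can be reached by a path, while the lifting back to $X$ is handled by the connectedness of the fibres of the map to the moduli of fibrations with section (your Ogg--Shafarevich/Tate--Shafarevich discussion is the same mechanism as the paper's $H^1(B,X^{\#})$ argument). Your version is in fact slightly more careful about the degrees of the Weierstrass data and about why the deformation of $J(X)$ lifts to $X$, and it dispenses with the paper's preliminary Torelli-based reduction to irreducible fibres, which your direct analysis of the bad locus in coefficient space makes unnecessary.
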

 \begin{proof}
   Assume that $X$ is an elliptic K3 surface with a singular fibre $L_u$ not of type $I_1$. Since the intersection matrix of K3 surface is even, there is no $-1$ curve inside the K3 surface $X$. In particular, the possible singular fibres of elliptic K3 surfaces are classified by Kodaira \cite{K1}. If any singular fibre has more than one components, then $X$ falls in a divisor of the moduli space by Torelli theorem of K3 surfaces and therefore non-generic. Thus, it suffices to prove the case when all the singular fibres of $X$ are irreducible without loss of generality. From the Kodaira's classification, then the possible singular fibres are of type $I_1$ (simple nodal curves) or type $II$ (cusp curves). By Proposition 9.1 \cite{BPV}, the relative Jacobian $\pi_J:J(X)\rightarrow \mathbb{P}^1$ of such elliptic surface is an elliptic surface with a section such that $\pi^{-1}(u)\cong \pi_J^{-1}(u)$ for each $u\in B$. This induces a submersion from the moduli space of marked elliptic K3 surfaces to the moduli space of marked elliptic K3 surfaces with a section. Given an elliptic K3 surface $\pi:X\rightarrow B$ with a section, the fibres of the above submersion are parametrized by $H^1(B,X^{\#})$, where $X^{\#}$ is the sheaf of sections. From the short exact sequence 
       \begin{align*}
          0\rightarrow R^1\pi_*\mathbb{Z}\rightarrow T^*\mathbb{P}^1\rightarrow X^{\#}\rightarrow 0,
       \end{align*} we have $H^1(B,X^{\#})\cong H^1(\mathbb{P}^1,T^*\mathbb{P}^1)/H^1(\mathbb{P}^1,R^1\pi_*\mathbb{Z})$ which is path connected of dimension $1$. Therefore, it suffices to prove the theorem for the relative Jacobians of elliptic K3 surfaces. In this situation, the elliptic surface with a section can be expressed in Weierstrass form
      \begin{align}\label{5}
        y^2z=x^3+a(t)xz^2+b(t)z^3 \in \mathbb{P}( \mathcal{O}_{\mathbb{P}^1}(2)\oplus\mathcal{O}_{\mathbb{P}^1}(3)\oplus\mathcal{O}_{\mathbb{P}^1}),
      \end{align} where $a(t)\in H^0(\mathbb{P}^1,\mathcal{O}_{\mathbb{P}^1}(4))$ and $b(t)\in H^0(\mathbb{P}^1,\mathcal{O}_{\mathbb{P}^1}(6))$. Straight-forward computation shows that the discriminant locus of the elliptic K3 surface in (\ref{5}) is $24$ points for generic choices of $a(t)$ and $b(t)$. This finishes the proof of the theorem.
 \end{proof}
   
 \begin{rmk}
   The statement of Theorem \ref{35} is not always true for general compact elliptic fibrations. There are examples of "non-Higgsable cluster" singular fibres. For instance such example can be constructed by a Weierstrass model $\pi: W_g\rightarrow \mathbb{F}_3$ of an elliptic Calabi-Yau $3$-fold over the Hirzebruch surface $\mathbb{F}_3$ and it admits a non-Higgsable singular fibre of type IV \cite{M4}. On the other hand, the local equation
      \begin{align*}
         y^2=x^3+(s^2+2\epsilon)x+s^2+\epsilon
      \end{align*} gives a deformation of a type $IV$ singular fibre to four type $I_1$ singular fibres.
 \end{rmk}

    The following is the the main theorem to approach the tropical geometry of K3 surfaces.
         \begin{thm} \label{399}
            Let $X_t$ be a $1$-parameter family of elliptic K3 surfaces with $X_0=X$.
            Given any $u\in B_0$, $\gamma\in H_2(X,L_u)$ such that $u\notin W'_{\gamma}$, then there exists $t_0$ such that 
            \begin{align*}
            \tilde{\Omega}_t(\gamma;u)=\tilde{\Omega}(\gamma;u)
            \end{align*} 
            for $t$, $|t|<t_0$. Here $\tilde{\Omega}_t(\gamma;u)$ denotes the open Gromov-Witten invariants of $X_t$. 
         \end{thm}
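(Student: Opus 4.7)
The plan is to show that the Fukaya-trick wall-crossing transformation $\tilde F_{u;t}$ that encodes $\tilde\Omega_t(\gamma;u)$ varies continuously in $t$ and is in fact locally constant near $t=0$. First, since $u\in B_0$, I would choose a contractible neighborhood $U\subset B$ of $u$ with compact closure disjoint from the discriminant locus $\Delta_0$ of $X_0$. By continuity of the elliptic fibration in $t$, there exists $t_0>0$ with $\overline U\cap\Delta_t=\emptyset$ for $|t|<t_0$. I would also argue that the hypothesis $u\notin W'_\gamma$ persists for small $t$: Gromov compactness bounds the area of any class $\gamma'$ that could carry a holomorphic disc with boundary in a fibre over $\overline U$, so only finitely many classes are relevant, and continuity of the central charges excludes the forbidden phase alignment $\mbox{Arg}\,Z_{\gamma_1;t}(u)=\mbox{Arg}\,Z_{\gamma_2;t}(u)$ with $\langle\gamma_1,\gamma_2\rangle\neq 0$ and $\gamma=\gamma_1+\gamma_2$ for $t$ sufficiently close to $0$. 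Hence $\tilde\Omega_t(\gamma;u)$ is well-defined on $|t|<t_0$.

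Next, following the construction of Section 2.3 I would write $\tilde F_{u;t}=\lim_i \tilde F_{u,\phi_{i,t};t}$, where $U_i\subset U$ is a nested basis of neighborhoods of $u$ and $\phi_{i,t}\subset U_i$ is a path joining endpoints $u^\pm_{i,t}$ whose fibres bound no holomorphic discs in $X_t$. For each fixed $i$, the transformation $\tilde F_{u,\phi_{i,t};t}$ decomposes as a finite product of Kontsevich--Soibelman-type factors, one for each wall $l_{\gamma';t}$ meeting $U_i$; each factor depends continuously on $t$ through the pseudo-isotopy formalism of Section 2.2, since the $A_\infty$-structure on $L_u$ varies smoothly under the tamed family of almost complex structures induced by the deformation. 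Passing to the limit $i\to\infty$ yields continuity of $\tilde F_{u;t}$ in $t$. Since $u\notin W'_{\gamma,t}$, formula (\ref{998}) applies and gives $\tilde F_{u;t}(z^{\partial\gamma'})=z^{\partial\gamma'}f_{\gamma,t}(u)^{\langle\gamma',\gamma\rangle}$, so $f_{\gamma,t}(u)$ and each coefficient $\tilde\Omega_t(d\gamma;u)$ vary continuously in $t$. Upgrading continuity to constancy is then immediate from a parametric analogue of Theorem \ref{1004}(3)--(4): the only way $\tilde\Omega_t(d\gamma;u)$ can jump is for the segment $\{u\}\times[0,t_0]$ to cross the total wall locus $\bigcup_t W'_{d\gamma,t}\times\{t\}\subset B\times[0,t_0]$, which the localization step has excluded.

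The principal technical obstacle lies in making the parametric wall-crossing argument precise: one must rule out the possibility that a new wall $l_{\gamma';t}$ corresponding to a previously vacant class enters $U_i$ as $t$ moves off $0$, and similarly exclude that two walls of different classes inside $U_i$ collide into a $W'$-type configuration exactly at $u$ during the deformation. The remedy is a uniform area lower bound $\omega_t(\gamma')\geq\epsilon(i)>0$ for any class carrying a holomorphic disc with boundary in a fibre over $\overline U_i$, which pushes every newly appearing wall outside $U_i$ for $|t|$ smaller than some $t_0(i)$, followed by a diagonal argument in $i$ to extract the final $t_0$ and identify the limits.
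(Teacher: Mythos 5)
Your overall setup (localizing near $u$, using Gromov compactness to reduce to finitely many classes below an area cutoff, and arranging that the relevant walls are avoided) matches the paper's, but the logical core of your argument has a genuine gap. You propose to show that $\tilde F_{u;t}$, hence $\tilde\Omega_t(\gamma;u)$, ``varies continuously in $t$'' and then to upgrade continuity to constancy by ``a parametric analogue of Theorem \ref{1004}(3)--(4).'' That analogue --- that the invariant can only change when the segment $\{u\}\times[0,t_0]$ crosses a wall locus in $B\times[0,t_0]$ --- is essentially the statement of Theorem \ref{399} itself restricted to the $t$-direction; Theorem \ref{1004} as stated only controls variation of $u$ inside a \emph{fixed} $X$, and extending it to variation of the complex structure is precisely what must be proved. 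Moreover, ``continuity in $t$'' of a transformation valued in a formal group over the Novikov ring is not a well-defined substitute: even granting it, continuity of a $\mathbb{Q}$-valued quantity does not force local constancy without an additional rigidity input, which you do not supply.

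The paper closes exactly this gap with a homotopy argument rather than a continuity argument. Fix the cutoff $\lambda_i$ and choose $u_i^{\pm}$ so that they avoid \emph{all} walls $l^t_{\gamma^{(i)}_j}$ of area below $\lambda_i$ for every $t\le t_i$; then the paths $\phi_i^{\pm}$ in the $t$-direction at the fixed points $u_i^{\pm}$ induce transformations $F_{\phi_i^{\pm}}\equiv\mathrm{Id}\ (\mathrm{mod}\ T^{\lambda_i})$, and the loop $(\phi_i^+)^{-1}\circ(\phi_{i,t})^{-1}\circ\phi_i^-\circ\phi_i$ is contractible, so Lemma \ref{521} gives the \emph{exact} identity $F_{\phi_i}=F_{\phi_{i,t}}\ (\mathrm{mod}\ T^{\lambda_i})$, whence equality of the two invariants in the limit. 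Your final paragraph also misassembles the area bound: a lower bound on $\omega_t(\gamma')$ does not ``push newly appearing walls outside $U_i$''; large-area classes are killed by working modulo $T^{\lambda_i}$, while the finitely many small-area walls are handled by choosing $U_i$ and the endpoints $u_i^{\pm}$ off all of them uniformly in $t\le t_i$, precisely so that the only wall separating $u_i^+$ from $u_i^-$ in either $X$ or $X_t$ is $l_{\gamma}$ (respectively $l^t_{\gamma}$). Reworking your argument around Lemma \ref{521} in this way would repair it.
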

         \begin{proof}
            Assume that $\mbox{Arg}Z_{\gamma}(u)=\vartheta-\pi/2$, there exists an affine line $l_{\gamma}$ with respect to the complex affine coordinate of $X_{\vartheta}$ passing through $u$ such that $\mbox{Arg}Z_{\gamma}(u')=\vartheta$ for $u'\in l_{\gamma}$. Similarly, let $l^t_{\gamma}$ be an affine ray from the singularity such that $\mbox{Arg}Z_{\gamma}(u)=\vartheta$ along $l^t_{\gamma}$. Since $\lim_{t\rightarrow 0}\Omega_t=\Omega$, we have $l^t_{\gamma}$ is a small deformation of $l_{\gamma}$. Choose $U'\subseteq U$ be a simply connected neighborhood of $u$ but does not contain $0$. Let $t_i\rightarrow 0$ be a sequence of positive numbers, $u_i^{\pm}\in U'$ are sequences of pairs of points on different sides of $l_{\gamma}$ such that 
                            \begin{enumerate}
                              \item $\mbox{Arg}Z_{\gamma}(u_i^+)<\vartheta<\mbox{Arg}Z_{\gamma}(u_i^-)$, and 
                              \item $\lim_{i\rightarrow \infty}u_i^{\pm}=u$. 
                              \item $u^{\pm}_i \notin l_{\gamma}^t$ for any $t\leq t_i$. 
                            \end{enumerate} Choose a sequence of positive numbers $\lambda_i$ such that $\lim_{i\rightarrow \infty}\lambda_i=\infty$. By Gromov compactness theorem, there exists only finitely many relative classes $\gamma^{(i)}_j$ with boundary on $L_{u'}, u'\in U'$ and  $|Z_{\gamma^{(i)}_j}(u)|<\lambda_i$ can be represented by holomorphic discs. We may further choose $U_i\subseteq U'$ (not necessarily connected) avoiding all such $l^t_{\gamma^{(i)}_j}$, $t<t_i$. When $|t|\ll 1$, we can have non-empty $U_i$s and choose $u^{\pm}_i$ from $U_i$. In particular, $\phi_i$ will avoid all such  $l^t_{\gamma^{(i)}_j}$ except $l^t_{\gamma}$. Let $\phi_i^{\pm}$ be the $1$-parameter family of (almost) complex structures induced from $(X_t,L_{u_i^{\pm}})$ From the construction, we have 
                                         \begin{align}
                                            & F_{\phi_i^{\pm}}\equiv \mbox{Id} \mbox{( mod $T^{\lambda_i}$)} 
                                         \end{align}
                                     from the first part of the theorem. Let $\phi$ (and of $\phi_{i,t}$) corresponding to the paths connecting $u^i_{\pm}$ on the base of $X$ (and $X_t$ respectively).
    If $t$ is small enough, the composition of the path $(\phi_i^+)^{-1}\circ (\phi_{i,t})^{-1}\circ \phi^{-}_i\circ{\phi_i}$ is contractible. From Lemma \ref{521}, we have the following 
            \begin{align} \label{46}
              F_{\phi_i}&= (F_{\phi_i^-})^{-1} \circ F_{\phi_{i,t}} \circ F_{\phi_i^+}= F_{\phi_{i,t}} (\mbox{ mod }T^{\lambda_i}) .
       %                & =\begin{cases}
       %                    e_1, &i=1 \\
       %                    e_2+\sum^n_{i=1}T^{Z_{\gamma_i}}e_1 & i=2
       %                \end{cases}\mbox{( mod $T^{\lambda}$)}.
            \end{align} 
       Then for large enough $i$, the left hand side of (\ref{46}) defines $\tilde{\Omega}(\gamma;u)$ while the right hand side defines $\tilde{\Omega}_t(\gamma;u)$. This finishes the proof of the theorem. 
                    %        \begin{align*}
                    %        \lim_{i\rightarrow %\infty}F_{(\phi_i,u)}=\exp{\big(\sum_{d\geq %1}d\tilde{\Omega}(d\gamma;u)(z^{\partial %\gamma})^d\big)}
                    %        \end{align*}
                    %         exists and is well-defined.

         \end{proof}    
       By Proposition 4.4.1 \cite{S6}, any holomorphic disc in $X$ with boundary in a special Lagrangian torus fibre and small symplectic area will fall in a tubular neighborhood of the fibre. If the tubular neighborhood does not contain a singular fibre, then $L_u$ topologically cannot bound any disc and leads to a contradiction. Therefore, any holomorphic disc with small symplectic area falls in a tubular neighborhood of some singular fibre, say $L_0$. Let $U$ be a neighborhood of $0$ and $X_{U}:=p^{-1}(U)$ be the pre-image of the fibration. Since $X_U$ has a deformation retract to the central fibre, we have 
                   $ H_2(X_U)\subseteq H_2(X)$ and this implies that $H_2(X_U,L_u)\subseteq H_2(X,L_u)$. 
       Therefore, we have 
         \begin{align} \label{666}
              \mathcal{M}_{\gamma}(X,L_u)=\mathcal{M}_{\gamma}(X_U,L_u)
          \end{align} and similarly for the $1$-parameter family of moduli spaces considered in the proof of Theorem \ref{399}. In particular, the later moduli space is compact and one can define the open Gromov-Witten invariants of $X_U$ for the pair $(u,\gamma)$, which we will denote by $\tilde{\Omega}^{loc}(\gamma;u)$. Similar to (\ref{667}), we may also define $\Omega^{loc}(\gamma;u)$.
%           From (\ref{666}), we have the following lemma:
%        \begin{lem}
%           Given $\gamma\in H_2(X_U,L_u)\subseteq H_2(X,L_u)$, then 
%            $
%              \tilde{\Omega}(\gamma;u')=\tilde{\Omega}^{loc}(\gamma;u)$, if $u'\in U$.
%             In particular, 
%             \begin{align*}
%                \lim_{u\rightarrow 0}\tilde{\Omega}(\gamma;u)=\tilde{\Omega}^{loc}(\gamma;u)
%                             \end{align*}
%        \end{lem} 
    Similarly, one can define the open Gromov-Witten invariants $\tilde{\Omega}^{loc}_t(\gamma;u)$ for $(X_U)_t$. Similar to the proof of Theorem \ref{399}, one have that 
          \begin{align} \label{312}
            \lim_{t\rightarrow 0}\tilde{\Omega}^{loc}_t(\gamma;u)=\tilde{\Omega}^{loc}(\gamma;u).
          \end{align}
%            Together with Theorem \ref{399}, we have 
%        \begin{align} \label{312}
%           \lim_{u\rightarrow 0}\lim_{t\rightarrow 0}\tilde{\Omega}_t(\gamma;u)=\tilde{\Omega}^{loc}(\gamma;u)=\lim_{t\rightarrow 0}\lim_{u\rightarrow 0}\tilde{\Omega}_t(\gamma;u).
%        \end{align}
%          

        We will use the notation $\tilde{\Omega}_*(\gamma;u)$ for the local open Gromov-Witten invariants of $X_U$, which is the germ of elliptic fibration over a disc with central fibre of type $*=I_n, II, III, IV, I_0^*$ in the later part of the paper.

        It worth mentioning that the argument of the whole paper applies to hyperK\"ahler surfaces $X$ with a minimal elliptic fibration together the following assumptions:
           \begin{enumerate}
              \item There exists a small deformation $X'$ of $X$ through hyperK\"ahler surface with elliptic fibration such that all the singular fibres are of $I_1$-type.
              \item The hyperK\"ahler metric of $X'$ is close enough to Ooguri-Vafa metric near the $I_1$-type singular fibres. 
           \end{enumerate}     
     %   In particular, this applies to certain Hitchin Moduli spaces, %which we will discuss in \cite{L10}.   
        
   \subsection{Type $I_n$ Singular Fibres} \label{1002}
      \begin{thm} \label{45} There exists primitive relative classes $\gamma_i, i=1,\cdots, n$ such that
        given any $\lambda>0$, there exists a neighborhood $\mathcal{U}_{\lambda}\ni 0$ such that for any  $u\in \mathcal{U}_{\lambda}$, $\gamma \in H_2(X,L_u)$ such that $|Z_{\gamma}(u)|<\lambda$, then for any $d\in \mathbb{Z}$\footnote{This follows from the reality condition in Theorem \ref{1004}.}
           \begin{align*}
              \tilde{\Omega}^{loc}_{I_n}(\gamma;u)=\begin{cases} 
                     \frac{(-1)^{d-1}}{d^2}, & \gamma=d \gamma_i \\
                     0, & otherwise, \end{cases}.
           \end{align*}      
      \end{thm}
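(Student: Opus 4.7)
The plan is to reduce the $I_n$ case to $n$ copies of the $I_1$ case by a local deformation. An $I_n$ singular fibre smooths to $n$ nearby type $I_1$ singular fibres; locally this can be realized in the Weierstrass form (or directly by perturbing the Ooguri-Vafa-type local model so that the single node of multiplicity $n$ splits into $n$ distinct nodes on nearby smooth fibres). Call the resulting one-parameter family of local elliptic fibrations $(X_U)_t$, with $(X_U)_0 = X_U$ the given $I_n$ neighborhood. Let $\gamma_{i,t} \in H_2((X_U)_t, L_u)$ denote the Lefschetz thimble emanating from the $i$-th $I_1$ singular fibre of $(X_U)_t$, and define $\gamma_i \in H_2(X_U, L_u)$ to be the parallel transport limit as $t \to 0$ (these are the primitive classes of the statement; note that all $\partial \gamma_i$ coincide with the unique vanishing cycle of the $I_n$ degeneration, so $\langle \gamma_i, \gamma_j \rangle = 0$ for all $i,j$).

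Next, I would apply the local version of equation (\ref{312}) to identify $\tilde{\Omega}^{loc}_{I_n}(\gamma; u)$ with $\tilde{\Omega}^{loc}_t(\gamma; u)$ for $|t|$ sufficiently small (depending on $\lambda$). After shrinking $\mathcal{U}_\lambda$ so that every point $u \in \mathcal{U}_\lambda$ lies simultaneously in the neighborhood $\mathcal{U}_d$ furnished by Theorem 4.44 of \cite{L4} for each of the $n$ type $I_1$ singular fibres of $(X_U)_t$ (with $d$ chosen larger than $\lambda/\min_i |Z_{\gamma_{i,t}}(u)|$), the local invariant at each $I_1$ fibre contributes exactly $(-1)^{d-1}/d^2$ along multiples of $\gamma_{i,t}$ and vanishes otherwise. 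This handles the contribution to $\tilde{\Omega}^{loc}_t(d\gamma_i; u)$ for each single index $i$.

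The main obstacle is to rule out contributions in mixed classes $\gamma = \sum_i a_i \gamma_{i,t}$ that are not supported on a single $I_1$ fibre. Here the key observation is that $\langle \gamma_{i,t}, \gamma_{j,t} \rangle = 0$, so any such mixed class $\gamma$ satisfies the constraint defining $W'_\gamma$ only trivially: a nontrivial decomposition $\gamma = \gamma_1 + \gamma_2$ with $\langle \gamma_1, \gamma_2\rangle \neq 0$ is impossible. Consequently, by the wall-crossing formula underlying Theorem \ref{1004}, no BPS state is produced in a mixed class: the relevant automorphisms of the Kontsevich-Soibelman type commute. More directly, at the level of holomorphic discs, any disc of mixed class would have to bubble into discs attached at each participating $I_1$ fibre; the Fukaya's trick applied across the finitely many walls $l_{\gamma_{i,t}}$ in $\mathcal{U}_\lambda$ then shows the contributions only survive on a single ray, and the transformation $\tilde{F}_u$ factors as a product of $n$ commuting $I_1$-transformations.

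Finally, I would pass to the limit $t \to 0$ using (\ref{312}): the individual contributions $\tilde{\Omega}^{loc}_t(d\gamma_{i,t}; u) = (-1)^{d-1}/d^2$ and the vanishings in all other classes are all preserved in the limit, and the parallel transport identifies $\gamma_{i,t} \to \gamma_i$. The hard step, as indicated, is the mixed-class vanishing: making rigorous that the commutativity of the $n$ $I_1$-transformations (forced by $\langle \gamma_i, \gamma_j\rangle = 0$) implies the absence of open Gromov-Witten contributions in genuinely mixed classes, rather than just the absence of wall-crossing contributions between them.
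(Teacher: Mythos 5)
Your proposal follows essentially the same route as the paper: deform the $I_n$ fibre into $n$ type $I_1$ fibres, identify the local invariants with those of the deformed fibration via the deformation-invariance statement, and use $\langle \gamma_i,\gamma_j\rangle=0$ so that the total wall-crossing transformation is the product of $n$ commuting $I_1$-transformations. The "hard step" you flag is resolved in the paper exactly by this commutativity: the logarithm of the composed transformation is the sum of the individual $\log f_{\gamma_i}$, each supported only on multiples of a single $\gamma_i$ in $H_2(X_U,L_u)\cong\mathbb{Z}^n$, so no genuinely mixed class carries an invariant.
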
 
      
      \begin{proof}
          From the gradient estimate of holomorphic discs, holomorphic discs of symplectic area $\epsilon$ should contains in a neighborhood of a singular fibre, say  There are two affine rays $l_{\pm}$ emanating from $0$ such that only special Lagrangian torus fibres over $l_{\pm}$ can bound holomorphic discs with symplectic area less than $\lambda$ in $X_{U}$.

          By Theorem \ref{35}, there exists a $1$-parameter family of hyperK\"ahler structures on the underlying space of $X_U$, which we will denote by $X_{U,\epsilon}$. Then $X_{U,t\neq 0}\rightarrow U$ is an elliptic fibration with $n$ singular $I_1$-type singular fibres. Let $\gamma_i\in H_2(X_{U,t},L_u),i=1,\cdots, n$ be the relative classes of Lefschetz thimbles associate to the $n$ singular $I_1$-type singular fibres. Under the identification $X_{U,t}\cong X_U$, we will view $\gamma_i\in H_2(X_U,L_u)$. The affine line $l_{\gamma_i}^t$ should fall in a neighborhood of $l_{\pm}$. Therefore, we have $\langle \gamma_i,\gamma_j \rangle=0$ and $Z_{\gamma_i}(u)=Z_{\gamma_j}(u)$ for $u\in U$ (here we use the holomorphic volume form on $X_U$ to define the central charge). By Mayer-Vietoris sequence and induction, we have $H_2(X_U,L_u)\cong \mathbb{Z}^{n}$, which is generated by $\gamma_i$. Follow the notation in Theorem \ref{399},
        (See Figure \ref{fig:55} below). 
             \begin{figure}
                                  \begin{center}
                                  \includegraphics[height=3in,width=6in]{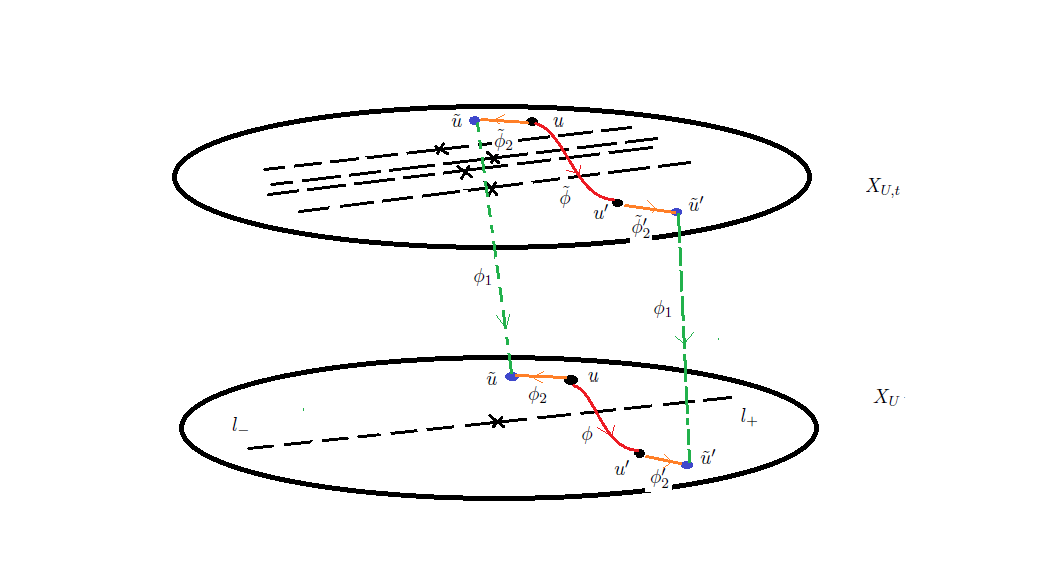}
                                  \caption{Deforming a Type $I_n$ Singular fibre.}
                                   \label{fig:55}
                                  \end{center}
                                  \end{figure}
 $F^{can}_{\tilde{\phi}}$ is the compositions of $n$ copies of the transformation (modulo $T^{\lambda}$) in the form
            \begin{align*}
               & e_1 \mapsto e_1 \\
               & e_2 \mapsto e_2+T^{Z_{\gamma_j}}e_1.
            \end{align*} Thus, we have 
            \begin{align*}
              & F^{can}_{\tilde{\phi}}(e_1)=e_1\\
              & F^{can}_{\tilde{\phi}}(e_2)=e_2+ \sum_{i=1}^{n} T^{Z_{\gamma_i}}e_1. 
            \end{align*} and the theorem is proved.
        
      \end{proof}
     
      \begin{figure}
                                        \begin{center}
                                        \includegraphics[height=3in,width=6in]{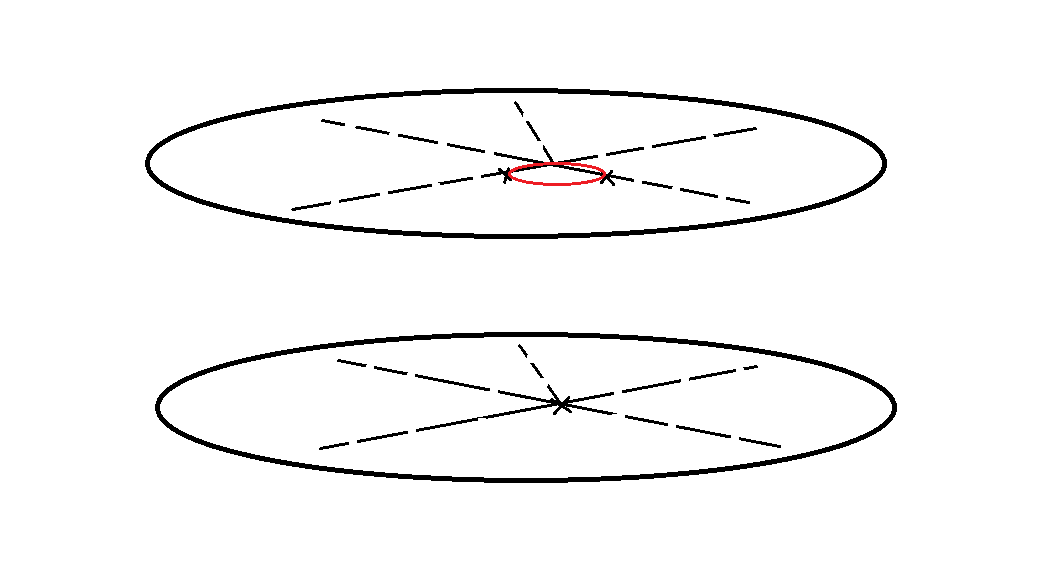}
                                        \caption{Deforming a Type $II$ Singular fibre.}
                                         \label{fig:65}
                                        \end{center}
                                        \end{figure}

 \subsection{Singular Fibres With Monodromy of Finite Order}  
  Recall that the central charge $Z_{\gamma}$ is a multi-value holomorphic function on the puncture disc $D^*$ for any relative class $\gamma$. Assume that the monodromy $M$ around the singular fibre is of finite order (say $k$). Equivalently, this implies the central singular fibre is of type $II$, $III$, $IV$, $IV^*$, $III^*$, $II^*$ or $I_0^*$ in the Kodaira's classification. The corresponding monodromy $M$ are conjugate via $GL(2,\mathbb{Z})$ to 
  $ \bigg(\begin{matrix}
   1 & 1\\ -1 & 0
  \end{matrix}\bigg)$, $ \bigg(\begin{matrix}
     0 & 1\\ -1 & 0
    \end{matrix}\bigg)$, $ \bigg(\begin{matrix}
       0 & 1\\ -1 & -1
      \end{matrix}\bigg)$, $ \bigg(\begin{matrix}
         -1 & -1\\ 1 & 0
        \end{matrix}\bigg)$, $ \bigg(\begin{matrix}
           0 & -1\\ 1 & 0
          \end{matrix}\bigg)$, $ \bigg(\begin{matrix}
             0 & -1\\ 1 & 1
            \end{matrix}\bigg)$ or $ \bigg(\begin{matrix}
               -1 & 0\\ 0 & -1
              \end{matrix}\bigg)$ respectively. Let $\pi_k: z\mapsto z^k$ be the $k$-fold ramification over the disc. Then $\lim_{z\rightarrow 0}Z_{\gamma}(z)=0$ implies that $\pi^*_kZ_{\gamma}$ is a global holomorphic function on the disc $D$. Together with the fact that $\partial \gamma$ and $M(\partial\gamma)$ generate $H_1(L_u;\mathbb{Z})$ and $M$ has no eigenvector over $\mathbb{Z}$ (except the case of $I_0^*$ which we will deal separately in Section \ref{920}), we have 
      \begin{align*}
          \pi_k^*Z_{\gamma}(z)=c_{\gamma}z^{a_*}+o(z^{a_*}), 
      \end{align*} where $z$ is the coordinate on $D$, $a_*\in \mathbb{N}$ such that $(a_*,k)=1$ and $*$ denotes the type of the singular fibre. A priori, $a_*$ might also have dependence on $\gamma$. However, $Z_{\gamma}$ is a homomorphism in $\gamma$ implies that $a_{*}$ is independent of $\gamma$. Equivalently, 
      \begin{align} \label{799}
         Z_{\gamma}(z)=c_{\gamma}z^{\frac{a_*}{k}}+o(z^{\frac{a_*}{k}}).
      \end{align} Notice that $Z_{\gamma}(e^{2 \pi i}u)=Z_{M\gamma}(u)$ and $c_{\gamma}e^{\frac{2\pi ia_*}{k}}=c_{M\gamma}$. The central charge $Z:H_2(X,L_u)\rightarrow \mathbb{C}$
          has kernel the image of $H_2(X)$ thus descends to $H_1(L_u)$. In particular, $\gamma\mapsto c_{\gamma}$ is an surjective homomorphism with the same kernel and descends to $H_1(L_u)$ as well. 
          
    Let $\phi_{\gamma}(t)$ be the affine line ending at $0$ in $B_{\vartheta}$. Then $\phi_{\gamma}(t)$ is characterized by $\mbox{Arg}Z_{\gamma}(\phi_{\gamma}(t))=\vartheta$. From (\ref{799}), we have  
       \begin{align}\label{913}
          \mbox{Arg}Z_{\gamma}(\phi_{\gamma}(t))\sim \mbox{Arg}c_{\gamma}+\frac{a_{*}}{k}\mbox{Arg}\phi_{\gamma}(t),
       \end{align} when $\phi_{\gamma}(t)$ is near $0$. Therefore, $\phi_{\gamma}(t)$ is a straight line in $U$ with respect to the standard affine structure passing through $0$. A direct consequence is the following :
        
        \begin{lem} \label{999}
        If $\gamma$ and $\gamma'$ do not differ by an element in $H_2(X_U,\mathbb{Z})$, then $\phi_{\gamma}(t)$ and $\phi_{\gamma'}(t)$ do not intersect in $U$.
        \end{lem}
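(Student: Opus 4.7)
I would argue by contradiction in two steps: first use the linearity of $\gamma\mapsto c_{\gamma}$ to show that the asymptotic angular directions of $\phi_{\gamma}$ and $\phi_{\gamma'}$ at the singularity must be distinct under the hypothesis, and then use (\ref{913}) to upgrade this to a statement about disjointness in a sufficiently small $U$.

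For the algebraic step, recall from the paragraph preceding the lemma that $\gamma\mapsto c_{\gamma}$ is a $\mathbb Z$-linear homomorphism whose kernel equals the image of $H_2(X_U;\mathbb Z)$, so it descends to an injective map $\bar c\colon H_1(L_u;\mathbb Z)\hookrightarrow\mathbb C$. Since the finite-monodromy hypothesis (with $I_0^*$ handled elsewhere) implies that $\partial\gamma$ and $M\partial\gamma$ generate $H_1(L_u;\mathbb Z)\cong \mathbb Z^2$ and $\bar c(M\partial\gamma)=e^{2\pi i a_*/k}\bar c(\partial\gamma)$ with $e^{2\pi i a_*/k}\notin\mathbb R$, the $\mathbb R$-linear extension $\bar c_{\mathbb R}\colon H_1(L_u;\mathbb R)\xrightarrow{\sim}\mathbb C$ is an isomorphism. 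Suppose for contradiction that $c_{\gamma}$ and $c_{\gamma'}$ are positive real multiples of each other, say $c_{\gamma}=s\,c_{\gamma'}$ with $s\in\mathbb R_{>0}$. Applying $\bar c_{\mathbb R}^{-1}$ yields $\partial\gamma=s\,\partial\gamma'$ in $H_1(L_u;\mathbb R)$; both sides being lattice vectors forces $s\in\mathbb Q$, and the primitivity of the boundary classes built into the standing setup then forces $s=1$. But then $\gamma-\gamma'$ lies in the kernel of $H_2(X,L_u)\to H_1(L_u)$, which equals the image of $H_2(X_U;\mathbb Z)$, contradicting the hypothesis. Hence $\arg c_{\gamma}\not\equiv \arg c_{\gamma'}\pmod{2\pi}$.

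For the geometric step, (\ref{913}) says that as $t\to 0$, $\arg\phi_{\gamma}(t)\to \tfrac{k}{a_*}(\vartheta-\arg c_{\gamma})$ and $\arg\phi_{\gamma'}(t)\to \tfrac{k}{a_*}(\vartheta-\arg c_{\gamma'})$, and by the previous step these two limits are distinct modulo $2\pi$. Each ray therefore lies eventually in a narrow angular wedge around its asymptote; choosing $U$ small enough so that both rays stay inside wedges whose closures are disjoint, we conclude $\phi_{\gamma}\cap\phi_{\gamma'}\cap U\subseteq\{0\}$, and since $\phi_{\gamma},\phi_{\gamma'}\subseteq B_0$ the singular value $0$ itself is excluded. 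The only delicate point, and the main obstacle, is getting uniform control of the remainder $o(z^{a_*/k})$ in (\ref{799}) precise enough to make the wedge argument rigorous; this is routine since $Z_{\gamma}(z)/z^{a_*/k}$ extends continuously to $0$ with nonzero value $c_{\gamma}$, yielding the requisite bound on $\arg Z_{\gamma}$ in a punctured neighborhood of the singularity.
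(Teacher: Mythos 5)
Your overall strategy is the one the paper intends: the lemma is stated there as a ``direct consequence'' of the asymptotic formula $\mbox{Arg}Z_{\gamma}(\phi_{\gamma}(t))\sim \mbox{Arg}\,c_{\gamma}+\frac{a_{*}}{k}\mbox{Arg}\,\phi_{\gamma}(t)$, with no further argument, and your first step (injectivity of the $\mathbb{R}$-linear extension of $\gamma\mapsto c_{\gamma}$ on $H_1(L_u;\mathbb{R})$, using that $\partial\gamma$ and $M\partial\gamma$ generate the lattice and $e^{2\pi i a_*/k}\notin\mathbb{R}$ for the non-$I_0^*$ types) is a correct and welcome filling-in of the linear algebra. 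Your observation that the hypothesis must be supplemented by primitivity (otherwise $\gamma'=d\gamma$ is a counterexample) is also right and consistent with the paper's standing conventions.

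There is, however, a genuine gap in your second step, and it is not the one you flag. You pass from $\mbox{Arg}\,c_{\gamma}\not\equiv\mbox{Arg}\,c_{\gamma'}\pmod{2\pi}$ to ``the limiting angles $\tfrac{k}{a_*}(\vartheta-\mbox{Arg}\,c_{\gamma})$ and $\tfrac{k}{a_*}(\vartheta-\mbox{Arg}\,c_{\gamma'})$ are distinct mod $2\pi$,'' but multiplication by $\tfrac{k}{a_*}$ is not injective on $\mathbb{R}/2\pi\mathbb{Z}$ when $a_*>1$, which is the case for every type treated here ($a_*=5,3,4$ for $II$, $III$, $IV$). Concretely, for $\gamma'=M^n\gamma$ one has $\mbox{Arg}\,c_{\gamma'}-\mbox{Arg}\,c_{\gamma}=\tfrac{2\pi a_*n}{k}\not\equiv 0$, yet $\tfrac{k}{a_*}$ times this difference is $2\pi n\equiv 0$; since $M^n\gamma-\gamma$ is generally not in $H_2(X_U,\mathbb{Z})$, these pairs satisfy the hypothesis but your wedge argument does not separate them. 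Indeed the set-theoretic locus $\{\mbox{Arg}Z_{\gamma}=\vartheta\}$ for the multivalued $Z_{\gamma}$ is a union of $a_*$ rays and is literally the same set for $\gamma$ and $M^n\gamma$; distinguishing $\phi_{\gamma}$ from $\phi_{M^n\gamma}$ requires the branch-cut bookkeeping that the paper alludes to (``the cut is made to avoid the rays''), i.e.\ tracking which of the $a_*$ solutions carries the label $\gamma$ in the chosen trivialization of the local system over $U$ minus the cut. This is exactly the combinatorics behind the paper's counts of $5$, $6$ and $8$ rays for types $II$, $III$ and $IV$, where monodromy-related labels live on the same underlying rays but on different sheets. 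By contrast, the point you single out as the ``main obstacle'' --- uniform control of the $o(z^{a_*/k})$ remainder --- is indeed routine, as you say.
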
 Let $(X_U)_{\epsilon}$ be an $1$-parameter family of elliptic fibration over $U$ such that $(X_U)_0=X_U$. Denote the corresponding affine line $\phi^{\epsilon}_{\gamma}(t)$. Then similarly we have 
        \begin{lem} \label{311} For $\epsilon\ll 1$, there exists an open neighborhood $U_{\epsilon}\subset U$ such that:
            If $\gamma$ and $\gamma'$ do not differ by an element in $H_2(X_U,\mathbb{Z})$, then $\phi^{\epsilon}_{\gamma}(t)$ and $\phi^{\epsilon}_{\gamma'}(t)$ do not intersect each other in a $U\backslash U_{\epsilon}$, where $\cap U_{\epsilon}=\{0\}$. 
        \end{lem}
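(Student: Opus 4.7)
The plan is to treat the statement as a perturbation of Lemma \ref{999}, exploiting the continuity of central charges under the deformation together with the explicit asymptotic expansion (\ref{913}) for the rays $\phi_{\gamma}$.

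First, at $\epsilon = 0$, fix a pair $\gamma, \gamma'$ with $\gamma - \gamma' \notin H_{2}(X_{U}, \mathbb{Z})$, so that $c_{\gamma} \neq c_{\gamma'}$. Combining Lemma \ref{999} with (\ref{913}), each ray $\phi_{\gamma}(t)$ approaches $0$ along a well-defined direction $\theta_{\gamma} := (k/a_{*})(\vartheta - \mbox{Arg}\,c_{\gamma})$ in the standard coordinate, and the distinctness of the leading coefficients gives a uniform angular gap $\delta = \delta(\gamma, \gamma', r, R) > 0$ between $\phi_{\gamma}$ and $\phi_{\gamma'}$ on every closed annulus $A_{r,R} := \{r \leq |z| \leq R\} \subset U$.

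Next, since $\Omega^{\epsilon} \to \Omega$ as $\epsilon \to 0$ and the relative classes $\gamma, \gamma'$ are carried along by the deformation, $Z^{\epsilon}_{\gamma} \to Z_{\gamma}$ uniformly on compact subsets of $U$ avoiding the (collapsing) singularities of $(X_{U})_{\epsilon}$. Applying the implicit function theorem to $\mbox{Arg}\,Z^{\epsilon}_{\gamma} = \vartheta$, the ray $\phi^{\epsilon}_{\gamma} \cap A_{r,R}$ converges to $\phi_{\gamma} \cap A_{r,R}$ in Hausdorff distance. For $\epsilon$ small enough that the perturbation is smaller than $\delta/2$, the rays $\phi^{\epsilon}_{\gamma}, \phi^{\epsilon}_{\gamma'}$ therefore remain disjoint on $A_{r,R}$. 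A diagonal argument along an exhaustion $r_{n} \downarrow 0$ then produces $\epsilon_{n} \to 0$ and neighborhoods $U_{\epsilon_{n}} := \{|z| < r_{n}\}$ with $\bigcap_{n} U_{\epsilon_{n}} = \{0\}$ such that non-intersection holds in $U \setminus U_{\epsilon_{n}}$ for all $\epsilon \leq \epsilon_{n}$.

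The main obstacle is uniformity over the infinitely many pairs $(\gamma, \gamma')$: one must rule out that sequences of rays have asymptotic directions accumulating near one another as the boundary classes grow. Here the discreteness of the lattice image $\{c_{\gamma}\}$, together with the observation that a ray entering $A_{r,R_{0}}$ forces $|c_{\gamma}|$ to lie in a bounded range (otherwise the ray either exits $U$ through the outer boundary or stays inside $\{|z| < r\}$), constrains the pairs needing control to a finite collection depending on $r$, validating the diagonal extraction and yielding the desired $U_{\epsilon}$.
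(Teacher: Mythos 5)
Your core strategy --- an angular gap between $\phi_{\gamma}$ and $\phi_{\gamma'}$ at $\epsilon=0$ extracted from the leading term of the expansion of $Z_{\gamma}$ near the singular fibre, followed by uniform convergence $Z^{\epsilon}_{\gamma}\to Z_{\gamma}$ on annuli $A_{r,R}$ to keep the perturbed rays disjoint there for $\epsilon$ small --- is exactly what the paper intends: it offers no written proof of Lemma \ref{311} beyond ``similarly we have,'' deferring to the asymptotics that give Lemma \ref{999}. For a \emph{fixed} pair $(\gamma,\gamma')$ your argument is correct, modulo one caveat you inherit from the paper's own phrasing: $c_{\gamma}\neq c_{\gamma'}$ does not imply $\mbox{Arg}\,c_{\gamma}\neq\mbox{Arg}\,c_{\gamma'}$. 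Taking $\gamma'=2\gamma$, the hypothesis ``$\gamma-\gamma'\notin H_2(X_U,\mathbb{Z})$'' is satisfied yet the two rays coincide. The angular gap exists only when $c_{\gamma'}\notin\mathbb{R}_{>0}\cdot c_{\gamma}$, i.e.\ when the boundary classes are non-proportional; for proportional classes the walls overlap but the associated transformations commute, which is how the paper excuses the analogous overlap for $\gamma-\gamma'\in H_2(X,\mathbb{Z})$ in the theorem that follows.

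The genuine gap is in your final uniformity paragraph. Every ray $\phi_{\gamma}$ runs from $0$ out to $\partial U$, so every ray meets every annulus $A_{r,R}$; the dichotomy ``otherwise the ray exits $U$ through the outer boundary or stays inside $\{|z|<r\}$'' never occurs and therefore places no bound on $|c_{\gamma}|$. Worse, the directions $\mbox{Arg}\,c_{\gamma}$, as $\gamma$ ranges over the rank-two lattice $H_1(L_u)$ (modulo the kernel of $Z$), are dense in $S^1$, so there is no angular gap uniform over all pairs and no finite collection of pairs controlling a fixed annulus. The lemma must therefore either be read with $U_{\epsilon}$ depending on the pair $(\gamma,\gamma')$ --- which suffices for the fixed-class applications in the paper --- or the classes must first be cut down to the finitely many with $|Z_{\gamma}|<\lambda$ that can actually bound holomorphic discs; this bounds $|c_{\gamma}|r^{a_*/k}$ and hence $|c_{\gamma}|$ on $A_{r,R}$, and the finiteness comes from Gromov compactness exactly as in the proof of Theorem \ref{399}, not from which rays enter the annulus. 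With that substitution your diagonal extraction goes through.
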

          Together with Theorem \ref{1004} and (\ref{913}), we reach the following theorem 
               \begin{thm}
                  Let $M$ be the monodromy around the singular fibre and $M$ of finite order. Then $\tilde{\Omega}^{loc}(\gamma;u)=\tilde{\Omega}^{loc}(M\gamma;u)$, for every $\gamma\in H_2(X,L_u)$. 
               \end{thm}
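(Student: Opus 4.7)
The strategy is to combine the deformation of Theorem \ref{35} with a parallel-transport argument around $0 \in U$, exploiting the rotational symmetry of the wall configuration described in (\ref{913}).

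First, by (\ref{312}) it suffices to establish $\tilde{\Omega}^{loc}_{\epsilon}(\gamma;u) = \tilde{\Omega}^{loc}_{\epsilon}(M\gamma;u)$ for the $1$-parameter deformation $(X_U)_\epsilon$ whose only singular fibres are of type $I_1$, and then pass to the limit $\epsilon \to 0$. I would fix $u \in U \setminus U_\epsilon$ and choose a smooth loop $\sigma\colon [0,1] \to U \setminus U_\epsilon$ with $\sigma(0) = \sigma(1) = u$ that winds once around $0$. By the topological definition of the monodromy, parallel transport along $\sigma$ sends $\gamma$ to $M\gamma \in H_2(X,L_u)$.

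Writing $\gamma_t$ for the parallel transport of $\gamma$ to $L_{\sigma(t)}$, I would study the step function $t \mapsto \tilde{\Omega}^{loc}_\epsilon(\gamma_t;\sigma(t))$. By Theorem \ref{1004}(3), it is locally constant away from the finitely many times $t$ with $\sigma(t) \in W'_{\gamma_t}$, and by Lemma \ref{311} each such crossing is generic and transverse (the walls outside $U_\epsilon$ being isolated straight rays emanating from the $I_1$ singularities). The theorem then reduces to showing that the jumps of this step function cancel out over the whole traversal of $\sigma$. To establish the cancellation I would use the $\mathbb{Z}_k$-rotational symmetry of the central charge: by (\ref{913}) the wall $l^\epsilon_{M\gamma'}$ is obtained from $l^\epsilon_{\gamma'}$ by rotation of the standard affine coordinate by $2\pi/k$, and under this rotation the wall-crossing automorphism of the form (\ref{998}) for the class $\gamma'$ is carried to the one for the class $M\gamma'$. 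Combined with Lemma \ref{521} applied on the $k$-fold ramified cover of $U \setminus \{0\}$, on which the lift of $\sigma^k$ is contractible thanks to (\ref{799}), this forces the net jump along $\sigma$ to vanish.

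The main obstacle will be converting the rotational symmetry at the level of the central charge and wall positions into a precise statement about the wall-crossing jumps of Theorem \ref{1004}(4); in particular, one must track the decompositions $\gamma_{t_0} = \sum_i \gamma_i$ arising at each wall-crossing time $t_0$ and verify that the $M$-images of these decompositions participate in matching jumps elsewhere along $\sigma$. I expect this to be handled by an induction on $|Z_\gamma(u)|$, using Theorem \ref{45} as the base case for classes of small symplectic area and the multiplicative structure of the Fukaya trick transformation (\ref{998}) for the inductive step.
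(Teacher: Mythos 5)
Your overall skeleton---transport $\gamma$ around a loop $\sigma$ encircling $0$, observe that parallel transport around the loop realizes $M$, and control $\tilde{\Omega}^{loc}$ along $\sigma$ via Theorem \ref{1004}---is exactly the argument the paper compresses into the phrase ``together with Theorem \ref{1004} and (\ref{913})''. However, the one step where real work is required is handled incorrectly, and the correct handling is much simpler than what you propose.

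First, the detour through the $I_1$-deformation $(X_U)_\epsilon$ and the subsequent ``cancellation of jumps'' machinery are unnecessary. The paper argues directly on $X_U$ itself: by (\ref{799}) and (\ref{913}) every wall $l_{\gamma'}$ is a straight ray through the origin in the standard affine coordinate, so by Lemma \ref{999} two walls $l_{\gamma_1}$ and $l_{\gamma_2}$ either meet only at $0$ or coincide, and they can coincide only when $\gamma_1-\gamma_2\in H_2(X_U,\mathbb{Z})$, in which case $\langle\gamma_1,\gamma_2\rangle=0$ and the defining condition of $W'_{\gamma}$ fails. Hence $W'_{\gamma}\cap(U\setminus\{0\})=\emptyset$: there are \emph{no} wall-crossings along $\sigma$ at all, your step function is constant, and Theorem \ref{1004}(3) applied to the entire loop yields $\tilde{\Omega}^{loc}(\gamma;u)=\tilde{\Omega}^{loc}(M\gamma;u)$ at once. (If you insist on working in $(X_U)_\epsilon$, Lemma \ref{311} plays the same role in the annulus $U\setminus U_\epsilon$; note that it asserts the relevant walls do \emph{not} intersect there, not that the crossings are ``generic and transverse''.) There are simply no jumps to cancel.

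Second, the cancellation mechanism you sketch would not survive scrutiny even if it were needed. The deformed family $(X_U)_\epsilon$ carries no exact $\mathbb{Z}_k$ rotational symmetry---only the leading term of $Z_\gamma$ at $\epsilon=0$ in (\ref{799}) does---so rotation by $2\pi/k$ does not carry the wall-crossing data of $(X_U)_\epsilon$ to itself, only approximately so. More seriously, the appeal to Lemma \ref{521} on the $k$-fold ramified cover fails: the lift of $\sigma^k$ winds once around the ramification point, which lies over the singular fibre, so it is \emph{not} contractible in the complement of the discriminant; any contraction must sweep across the singular fibre, where the family of Lagrangian tori underlying the Fukaya trick degenerates and Lemma \ref{521} does not apply. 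Replace that entire portion of the argument with the straight-ray/no-intersection observation above and the proof closes.
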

        
         Theorem \ref{399} allows us to compute the open Gromov-Witten invariants (of a K3 surface) contributed from a germ of a general singular fibre by computing the open Gromov-Witten invariant (of a K3 surface) contributed from specific germ of fibration with only type $I_1$ singularities. A priori, the open Gromov-Witten invariants are not defined all possible deformation of the germ of a general singular fibre due to the issue of existence of hyperK\"ahler metric. However, the tropical discs invariant, which we denote by $\tilde{\Omega}^{trop,loc}(\gamma;u)$, for a germ of elliptic fibration with only $I_1$ type singular fibre can be defined.  Indeed, given any elliptic fibration over a disc with only type $I_1$ singular fibres, there always exists a canonical holomorphic $2$-form\footnote{In particular, if the elliptic fibration is inside an elliptic K3 surface with only $I_1$-singular fibres, this holomorphic $2$-form coincides with the restriction of the holomorphic volume of the K3 surface.} on the total space up to a $\mathbb{C}^*$-scaling. All the tropical discs and the associate weights are determined by the holomorphic $2$-form. 
        The Lemma \ref{311} also leads to the second Theorem to calculate the open Gromov-Witten invariants:
        \begin{thm} Assume that the germ of elliptic fibration $X_U$ has monodromy of finite order, then 
           \begin{align} \label{313}
              \tilde{\Omega}^{loc}(\gamma;u)=\lim_{t\rightarrow 0}\tilde{\Omega}_t^{trop,loc}(\gamma;u).
           \end{align} In particular, one can compute the local open Gromov-Witten invariants by tropical geometry with respect to any deformation (namely, $(X_U)_t$ not necessarily hyperK\"ahler).
        \end{thm}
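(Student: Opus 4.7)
The plan is a two-stage reduction. First I would establish (\ref{313}) for a specific hyperK\"ahler deformation of $X_U$ to a germ of elliptic fibration with only $I_1$-type singular fibres, where the correspondence theorem from Section \ref{1005} is already available. Then I would extend to arbitrary (possibly non-hyperK\"ahler) deformations by showing that the limit on the right-hand side of (\ref{313}) is independent of the choice of deformation.

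For the first stage, I would invoke the existence of a hyperK\"ahler $1$-parameter family $(X_U)_t$ with $(X_U)_0 = X_U$ such that all singular fibres of $(X_U)_{t \neq 0}$ are of type $I_1$. For each of the finite-order monodromy types $II, III, IV, IV^*, III^*, II^*, I_0^*$, such a family is provided by an explicit Weierstrass perturbation analogous to the one displayed in the remark following Theorem \ref{35}. For this deformation, the local analog of (\ref{312}) gives $\tilde{\Omega}^{loc}(\gamma;u) = \lim_{t \to 0}\tilde{\Omega}^{loc}_t(\gamma;u)$. Since each $(X_U)_t$ for $t \neq 0$ is modeled near its $I_1$-singularities on the Ooguri--Vafa space, the local correspondence theorem from Section \ref{1005} gives $\tilde{\Omega}^{loc}_t(\gamma;u) = \tilde{\Omega}^{trop,loc}_t(\gamma;u)$. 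Composing these identities proves (\ref{313}) for this particular deformation.

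For the second stage, I would observe that $\tilde{\Omega}^{trop,loc}_t$ depends only on the integral affine structure of $(X_U)_t$ and on the universal Ooguri--Vafa invariants at each $I_1$-node. The affine structure is determined by the central charges $Z^t_\gamma$, which vary continuously in $t$ and whose leading behavior near $0$ is dictated by (\ref{799}) and hence depends on the type of the central fibre rather than on the particular deformation. On each compact set $U \setminus U_\varepsilon$ the affine structure of $(X_U)_t$ therefore converges to that of $X_U$ as $t \to 0$. By Lemma \ref{311}, for sufficiently small $t$ the affine rays $\phi^t_{\gamma'}$ corresponding to classes distinct modulo $H_2(X_U)$ remain pairwise separated outside $U_\varepsilon$, so the combinatorial types of tropical discs contributing to $\tilde{\Omega}^{trop,loc}_t(\gamma;u)$ with $|Z_\gamma(u)| < \lambda$ stabilize for small $t$. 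Consequently $\lim_{t \to 0}\tilde{\Omega}^{trop,loc}_t(\gamma;u)$ is independent of the choice of deformation, and combining with the first stage completes the proof.

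I expect the main technical difficulty to be in the second stage, where as $t \to 0$ the several $I_1$-singularities of $(X_U)_t$ coalesce to $0$ and vertices of a tropical disc that lie near $0$ may merge, producing contracted edges whose weight is governed by the automorphism factor $|\mbox{Aut}(\bold{w}_{T_e})|^{-1}$ in (\ref{4029}). Matching the weighted counts between two different deformations in the limit therefore reduces to a formal tropical identity expressing how the local vertex multiplicities and automorphism factors combine under coalescence; this should follow from the balancing condition together with the asymptotic form (\ref{913}) of the central charges, but the book-keeping is where the real work lies.
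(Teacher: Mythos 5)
Your two-stage plan is essentially the paper's proof: first establish (\ref{313}) for a hyperK\"ahler family $(X_U)_t$ with only $I_1$-type fibres by combining the correspondence theorem $\tilde{\Omega}^{loc}_t(\gamma;u)=\tilde{\Omega}^{trop,loc}_t(\gamma;u)$ with the limit statement (\ref{312}), and then show the right-hand side of (\ref{313}) is independent of the chosen deformation via Lemma \ref{311}. The first stage matches the paper exactly. In the second stage, however, you stop short at precisely the point the paper resolves with a one-line observation. You correctly note that Lemma \ref{311} only keeps the rays $\phi^{t}_{\gamma}$ and $\phi^{t}_{\gamma'}$ apart when $\gamma$ and $\gamma'$ are distinct modulo $H_2(X_U,\mathbb{Z})$, and you defer the remaining case --- rays that do meet as the $I_1$ points coalesce --- to an unproved ``formal tropical identity'' about merging vertices and automorphism factors in (\ref{4029}). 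No such identity is needed: if $\gamma-\gamma'\in H_2(X_U,\mathbb{Z})$ then $\partial\gamma=\partial\gamma'$ and hence $\langle\gamma,\gamma'\rangle=0$, so the associated wall-crossing transformations of the form (\ref{998}) commute; the intersection of $l_{\gamma}$ and $l_{\gamma'}$ therefore produces no new rays and no new tropical discs, and $\tilde{\Omega}^{trop,loc}_t(\gamma;u)$ is locally constant on $U\setminus U_{\epsilon}$, hence its $t\to 0$ limit is independent of the family. You should replace the deferred bookkeeping argument with this commutation observation; as written, the second stage of your proposal is incomplete rather than wrong.
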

        \begin{proof}
         From the correspondence theorem between tropical discs and open Gromov-Witten invariant, we have
        \begin{align*}
           \tilde{\Omega}_t^{loc}(\gamma;u)=\tilde{\Omega}^{trop,loc}_t(\gamma;u). 
        \end{align*} Together with (\ref{312}), we have 
         \begin{align*}
            \tilde{\Omega}^{loc}(\gamma;u)=\lim_{t\rightarrow 0}\tilde{\Omega}^{loc}_t(\gamma;u)=\lim_{t\rightarrow 0}\tilde{\Omega}^{trop,loc}_t(\gamma;u),
         \end{align*} for a $1$-parameter family of hyperK\"ahler  elliptic fibration $(X_U)_t$ which admits only type $I_1$ singular fibres for $t>0$ and $(X_U)_0=X_U$. Now we want to prove that right hand side of (\ref{313}) is independent of the family $(X_U)_t$ with only type $I_1$ singular fibre for $t>0$ and $(X_U)_0=X_U$. Notice that Lemma \ref{311} holds for any such family, as $t\ll 1$.  
          Although the curves $l_{\gamma}$ and $l_{\gamma'}$ may intersect when $\gamma-\gamma'\in H_2(X,\mathbb{Z})$, the corresponding transformation in the form of (\ref{998}) commute since $\langle \gamma,\gamma'\rangle=0$. In particular, this implies that 
                $\tilde{\Omega}^{trop,loc}(\gamma;u)$ is independent of $u\in U\backslash U_{\epsilon}$. 
        
        \end{proof}
 %        In particular, we have 
 %      \begin{prop} Fix $\vartheta\in S^1$.
 %         For every $u\in U$, there exists at most one $\gamma\in %H_2(X,L_u)$ such that $Z_{\gamma}(u)\in %e^{i\vartheta}\mathbb{R}_{>0}$. 
 %      \end{prop}       

        One can modified the definition of tropical counting according to Theorem \ref{45}, Theorem \ref{913}, Theorem \ref{916} and Theorem \ref{917}. 
        \begin{thm} Let $X$ be a K3 surface with only type $I_n$, $II$, $III$, $IV$ singular fibres. Assume that $u\notin W'_{\gamma}$, then we have the correspondence theorem 
         \begin{align*}
            \tilde{\Omega}(\gamma;u)=\tilde{\Omega}^{trop}(\gamma;u),
         \end{align*} where $\tilde{\Omega}^{trop}(\gamma;u)$ is defined via Definition \ref{914} with 2.(c) replaced by \\
         (c') If $e$ is an edge adjacent to a valency one vertex corresponds to the type $*$ singular fibre, where $*$ is possibly $I_n$, $II$, $III$, $IV$, then $\phi(e)$ is in an affine segment labeled by $\gamma$ such that $\Omega^{loc}_*(\gamma;u)\neq 0$.
        \end{thm}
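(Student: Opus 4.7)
The plan is to bootstrap from the already established correspondence theorem for K3 surfaces with only type $I_1$ singular fibres, using the deformation framework set up in Theorem \ref{35} and the invariance statement of Theorem \ref{399}. Concretely, by Theorem \ref{35} there is a $1$-parameter family $X_t$ of elliptic K3 surfaces with $X_0=X$ and such that for $t\neq 0$ small, $X_t$ has only type $I_1$ singular fibres. By Theorem \ref{399}, for any fixed $(u,\gamma)$ with $u\notin W'_\gamma$ one has $\tilde{\Omega}_t(\gamma;u)=\tilde{\Omega}(\gamma;u)$ for all sufficiently small $t$. Applying the $I_1$-only correspondence theorem of Section \ref{1005} (proved in \cite{L8}) on $X_t$ then gives $\tilde{\Omega}_t(\gamma;u)=\tilde{\Omega}^{trop}_t(\gamma;u)$.

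The next step is to match $\tilde{\Omega}^{trop}_t(\gamma;u)$ with the modified tropical count $\tilde{\Omega}^{trop}(\gamma;u)$ defined with condition (c'). For each non-$I_1$ singular fibre $L_{u_*}$ of $X_0=X$ of type $*\in\{I_n,II,III,IV\}$, Section \ref{1002} and the corresponding theorems for the finite-monodromy cases identify a neighborhood $\mathcal{U}_*\ni u_*$ in which all the type $I_1$ singular fibres of $X_t$ produced by smoothing $L_{u_*}$ are concentrated; outside these neighborhoods the affine lines $l^t_{\gamma'}$ stay uniformly close to the original $l_{\gamma'}$ by Lemma \ref{311}. Given an admissible tropical disc $(\phi_t,T_t,w_t)$ on $X_t$ with $[\phi_t]=\gamma$ and $|Z_\gamma(u)|<\lambda$, the subset of external vertices mapped into $\mathcal{U}_*$ together with the edges and internal vertices of $T_t$ wholly contained in $\mathcal{U}_*$ form a union of sub-tropical-discs stopping at the boundary $\partial \mathcal{U}_*$. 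As $t\to 0$, these sub-tropical-discs contract onto $u_*$ and the remaining part of $T_t$ converges to a tropical disc on $X$ whose (unique) edge reaching $u_*$ now plays the role of a valency-one vertex at a type $*$ singular fibre in the sense of (c').

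The core combinatorial step is the identity: the sum over all configurations of the collapsing sub-tropical-discs with a prescribed total relative class $\gamma_*$ exiting through $\partial\mathcal{U}_*$, weighted by their tropical multiplicities, equals $\tilde{\Omega}^{loc}_*(\gamma_*;u)$. This is precisely the content of Theorem \ref{45} for the $I_n$ case and of the analogous Theorems \ref{913}, \ref{916}, \ref{917} for the types $II$, $III$, $IV$ referenced at the end of the paper, together with (\ref{312}) and the formula (\ref{313}) which identifies the limit of the $I_1$-tropical counts on $(X_U)_t$ with the local open Gromov–Witten invariants of $X_U$. Inserting this local identity as the weight of the edge labelled by $\gamma_*$ converts $\tilde{\Omega}^{trop}_t(\gamma;u)$ into exactly the modified count $\tilde{\Omega}^{trop}(\gamma;u)$ governed by condition (c'); combining with the previous steps yields $\tilde{\Omega}(\gamma;u)=\tilde{\Omega}^{trop}(\gamma;u)$.

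The main obstacle is the bookkeeping of multiplicities in this factorisation step: one must check that the tropical multiplicity $\mathrm{Mult}(\phi_t)$ on $X_t$ factors, in the $t\to 0$ limit, into a product of (i) the multiplicity of the limiting tropical disc on $X$ (with leaves at type $*$ singularities now weighted by $\tilde{\Omega}^{loc}_*$) and (ii) the local multiplicities summed up to $\tilde{\Omega}^{loc}_*(\gamma_*;u)$, with no double counting coming from automorphisms of contracted edges or from coincidences of the $c_{\gamma}$'s in (\ref{799}). Controlling this relies on Lemma \ref{999} and Lemma \ref{311}, which prevent interaction between edges converging to distinct singular fibres of $X$, together with the admissibility conditions of \cite{L8} that already rule out overcounting at trivalent vertices.
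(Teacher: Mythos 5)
Your argument is essentially correct in outline, but it takes a different route from the one the paper intends. The paper disposes of this theorem by saying the proof is the same as that of Theorem 6.28 of \cite{L8}: one reruns the wall-crossing induction on energy directly on $X$ itself, using the fact that both $\tilde{\Omega}(\cdot;u)$ (via part (4) of Theorem \ref{1004}) and the tropical count $\tilde{\Omega}^{trop}$ satisfy the same Kontsevich--Soibelman-type recursion across walls, and that they share the same initial data on the walls emanating from the singular fibres --- initial data which, for type $*=I_n, II, III, IV$, is exactly what Theorems \ref{45}, \ref{913}, \ref{916}, \ref{917} supply. Your proof instead deforms the whole K3 to an $I_1$-only surface $X_t$ via Theorem \ref{35}, invokes Theorem \ref{399} and the $I_1$ correspondence theorem on $X_t$, and then argues that $\lim_{t\to 0}\tilde{\Omega}^{trop}_t(\gamma;u)$ equals the modified count with condition (c'). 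What your route buys is that it never has to re-examine the scattering induction of \cite{L8}; what it costs is that the entire burden shifts to the degeneration of tropical discs, i.e.\ the factorisation of $\mathrm{Mult}(\phi_t)$ into an external part and local clusters whose weighted sum is $\tilde{\Omega}^{loc}_*(\gamma_*;u)$. You correctly identify Lemma \ref{311} as the reason all local scattering is eventually confined to the shrinking neighborhoods $U_\epsilon$, and (\ref{312})--(\ref{313}) as the local version of the limit you need; but the globalisation of (\ref{313}) --- in particular the claim that distinct clusters exiting $\partial\mathcal{U}_*$ in different directions contribute independently, and that no multiplicity is lost or double-counted when several leaves of the limit disc land on the same type $*$ fibre with classes $\gamma_*$, $\gamma_*'$ satisfying $\langle\gamma_*,\gamma_*'\rangle\neq 0$ --- is asserted rather than proved, and is precisely the content that the wall-crossing formulation handles automatically. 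If you want to complete your version, that interchange of limit and weighted sum is the one step that needs to be written out; otherwise the argument is sound and consistent with the paper's framework.
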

         The proof is the same as the proof in Theorem 6.28 \cite{L8} and we will omit it here. 
            
    \subsection{Type $II$ Singular Fibres} \label{22}
  Under the similar notation in Theorem \ref{399} and Section \ref{1002}, assume that $L_0$ is a type $II$ singular fibre over $0\in B$ in $X$ and $U$ is a small open neighborhood $U$ of $0$. Then $X_{U,t}\cong_{top} X_U\rightarrow U$ is an elliptic fibration with two $I_1$-type singular fibres $L_{a_t}$ and $L_{b_t}$. Straight-forward calculation shows that the natural boundary map induces isomorphism $H_2(X_U,L_u)\cong H_1(L_u)$, for any $u\in U$.
     \begin{lem}
       Fix $L_u$ a elliptic fibre and let $\gamma_a,\gamma_b\in H_2(X_{U,t},L_u)$ be the Lefschetz thimble of the two singular fibres, then $\langle \partial\gamma_{a_t},\partial\gamma_{b_t} \rangle=\pm 1$. 
     \end{lem}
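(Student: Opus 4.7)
The plan is to use Picard--Lefschetz theory together with the trace constraint imposed by the monodromy of a type $II$ singular fibre. Set $\delta_a := \partial\gamma_{a_t}$ and $\delta_b := \partial\gamma_{b_t}$, the two vanishing cycles in $H_1(L_u;\mathbb{Z})$ produced by the deformation.

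First I would record the two relevant monodromy facts. Since $U\setminus\{0\}$ deformation retracts onto $U\setminus\{a_t,b_t\}$ in such a way that a small loop around $0$ becomes the (suitably ordered) product of small loops around $a_t$ and $b_t$, the monodromy $M$ of $H_1(L_u;\mathbb{Z})$ around the type $II$ fibre factors as $M = T_{\delta_a}\circ T_{\delta_b}$ (or its inverse), where the Picard--Lefschetz transformation is $T_\delta(x) = x - \langle x,\delta\rangle\,\delta$. By the classification already invoked above, the type $II$ monodromy is conjugate in $GL(2,\mathbb{Z})$ to $\bigl(\begin{smallmatrix} 1 & 1 \\ -1 & 0 \end{smallmatrix}\bigr)$, hence has trace $1$.

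Next I would perform the trace computation. Since a vanishing cycle is primitive in $H_1(L_u;\mathbb{Z})$, after an $SL(2,\mathbb{Z})$-change of basis I may assume $\delta_a = e_1$ and $\delta_b = \alpha e_1 + \beta e_2$ in a symplectic basis with $\langle e_1, e_2 \rangle = 1$; then $\beta = \langle \delta_a, \delta_b\rangle$. Substituting into the Picard--Lefschetz formula gives $T_{\delta_a} = \bigl(\begin{smallmatrix} 1 & 1 \\ 0 & 1 \end{smallmatrix}\bigr)$ and $T_{\delta_b} = \bigl(\begin{smallmatrix} 1-\alpha\beta & \alpha^2 \\ -\beta^2 & 1+\alpha\beta \end{smallmatrix}\bigr)$, and a direct multiplication yields $\mathrm{tr}(T_{\delta_a}T_{\delta_b}) = 2 - \beta^2 = 2 - \langle \delta_a, \delta_b\rangle^{2}$.

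Combining the factorization with the trace condition produces $2 - \langle \delta_a, \delta_b\rangle^{2} = 1$, so $\langle \delta_a, \delta_b\rangle = \pm 1$, as claimed. The main subtlety, and the only real obstacle, is justifying the factorization $M = T_{\delta_a}\circ T_{\delta_b}$ at the level of the local system $H_1(L_u;\mathbb{Z})$; I would verify it by a van Kampen argument on $\pi_1(U\setminus\{a_t,b_t\})$ together with the standard Picard--Lefschetz description of the monodromy around each nodal degeneration. The specific ordering (and hence the sign of $\langle \delta_a,\delta_b\rangle$) depends on a choice of basepoint, loop orientation, and orientation of each vanishing cycle, which is exactly why the statement only asserts $\pm 1$ rather than a definite sign.
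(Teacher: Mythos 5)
Your argument is essentially the paper's: both write $\partial\gamma_{b_t}$ in a symplectic basis containing $\partial\gamma_{a_t}$, realize the type $II$ monodromy as the product of the two Picard--Lefschetz transvections, and use the trace constraint coming from Kodaira's classification to force $\langle\partial\gamma_{a_t},\partial\gamma_{b_t}\rangle^2=1$. Your trace computation $\mathrm{tr}(T_{\delta_a}T_{\delta_b})=2-\langle\delta_a,\delta_b\rangle^2$ is the clean (and correct) version of this step; the paper's displayed product matrix contains a small arithmetic slip and it misstates the type $II$ trace as $2$ rather than $1$, but both routes arrive at the same conclusion $l_2=\pm1$.
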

     \begin{proof}
       Let $\partial\gamma_1=\partial\gamma_{a_t}, \partial\gamma_2$ be an integral symplectic basis of $H_2(L_u,\mathbb{Z})$ such that $\langle \partial \gamma_1,\partial\gamma_2\rangle=1$. Assume that $\partial \gamma_{b_t}=l_1 \partial \gamma_1+l_2\partial \gamma_2$, then the global monodromy is (up to conjugation)\footnote{Recall that $AB$ is always conjugate to $BA$.} $\bigl(
                 \begin{smallmatrix}
                   1-l_1l_2 & 1-l_1l_2-l_1^2\\
                   -l_2^2 & 1-l_1l_2-l_2^2
                 \end{smallmatrix} \bigr)$ respect to $\partial\gamma_1,\partial \gamma_1$ by the Picard-Lefschetz formula. From the Kodaira's classification of elliptic singular fibres, the trace of the monodromy matrix can only be $0,\pm 1,\pm 2$. Again by comparing the Kodaira's classification, this corresponds to the singular fibres of type $I_2$, $II$ or $II^*$, $I_2^*$ respectively. In particular, this implies $(2l_1+l_2)l_2=1$. Since the trace of the monodromy matrix of a type $II$ singular fibre is $2$, we have $l_2=\pm 1$.  Therefore, a type $II$ singular fibre can only deform into two $I_1$ singular fibres with the pairing	 $\pm 1$. 
     \end{proof}
     
     By Picard-Lefschetz formula, the set $\{\pm\gamma_1,\pm \gamma_2, \pm (\gamma_1+\gamma_2)\}$ is invariant under the monodromy $\bigl(
                                       \begin{smallmatrix}
                                         0 & 1\\
                                         -1 & 1
                                       \end{smallmatrix} \bigr)$.
     Conversely, we fix a basis $\partial \gamma_a,\partial \gamma_b\in H_2(X,L)$ such that the monodromy around the type $II$ singular fibre is $A=\bigl(
                                          \begin{smallmatrix}
                                            0 & 1\\
                                            -1 & 1
                                          \end{smallmatrix} \bigr)$. 
        Assume that $G=\bigl(\begin{smallmatrix}
                                                    a & b\\
                                                    c & d
                                                  \end{smallmatrix} \bigr)\in GL(2,\mathbb{Z})$ commutes with $A$, namely a change of basis preserves the monodromy matrix. 
     Direct computation shows that 
        \begin{align*}
          & G= \bigl(\begin{smallmatrix}
                                                               a & b\\
                                                               -b & a+b
                                                             \end{smallmatrix} \bigr)  \mbox{ and }\\
          & a^2+ab+b^2=1 (detG=\pm 1). 
        \end{align*}          The only possibilities of $G$ are elements of subgroup (isomorphic to $\mathbb{Z}_6$) of $GL(2,\mathbb{Z})$ generated by 
           $\bigl(\begin{smallmatrix}
                      0 & 1\\
                      -1 & 1
              \end{smallmatrix} \bigr)$. It is straightforward to check that the set $\{\pm\gamma_1,\pm \gamma_2, \pm (\gamma_1+\gamma_2)\}$ is preserved under the group action generated by $B$.

     It is known as the "pentagon equation" that $\langle \gamma_1,\gamma_2\rangle=1$, then 
       \begin{align*}
         \mathcal{K}_{\gamma_1}\mathcal{K}_{\gamma_2}=\mathcal{K}_{\gamma_2}\mathcal{K}_{\gamma_1+\gamma_2}\mathcal{K}_{\gamma_1}.
       \end{align*} Following the lines of the proof of Theorem \ref{1001} with Figure \ref{fig:65}, we have the following theorem.      
      \begin{thm} \label{913}
         Under the same notation as in Theorem \ref{45}, 
           \begin{align*}
              \Omega_{II}^{loc}(\gamma;u)=\begin{cases}
                                       1, & \gamma=\pm \gamma_1,\pm\gamma_2,\pm(\gamma_1+\gamma_2),\\
                                       0, & otherwise.
                                      \end{cases}  
           \end{align*} Here $\gamma_1,\gamma_2$ are two relative classes such that the monodromy around the type $II$ singular fibre is  $\bigl(\begin{smallmatrix}
                                                                       0 & 1\\
                                                                       -1 & 1
                                                               \end{smallmatrix} \bigr)$.
      \end{thm}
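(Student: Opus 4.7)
The plan is to deform the type $II$ singular fibre into two type $I_1$ singular fibres and transport the calculation across this deformation via Theorem \ref{399} (or rather its local version, formula (\ref{313})). By the preceding lemma, any such deformation $(X_U)_t$ produces two $I_1$ fibres whose Lefschetz thimbles $\gamma_1,\gamma_2\in H_2((X_U)_t,L_u)$ satisfy $\langle\gamma_1,\gamma_2\rangle = \pm 1$; after relabeling we may assume the pairing equals $+1$, matching the setup of the pentagon identity.

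I would then analyze the two affine rays $l_{\gamma_1}^t$ and $l_{\gamma_2}^t$ emanating from the two $I_1$ singularities in the deformed base. For a generic $u\in U\setminus U_\epsilon$ with $|Z_\gamma(u)|<\lambda$, the limit $\lim_{t\to 0}\tilde\Omega^{trop,loc}_t(\gamma;u)$ from (\ref{313}) is computed by enumerating admissible tropical discs in $(X_U)_t$. Since only the classes $\pm\gamma_i$ support simple holomorphic discs near each $I_1$ singularity (with the known multiple-cover invariants $\tilde\Omega_{I_1}(d\gamma_i;u)=(-1)^{d-1}/d^2$), the only new tropical discs that arise after the deformation are those produced by trivalent interactions along the wall where $l_{\gamma_1}^t$ and $l_{\gamma_2}^t$ meet. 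I would compute the total formal automorphism $F^{can}_{\tilde\phi}$ associated to a small loop around the disc $U$ using Figure \ref{fig:65}.

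The key computational step is the pentagon identity
\begin{align*}
\mathcal{K}_{\gamma_1}\mathcal{K}_{\gamma_2}=\mathcal{K}_{\gamma_2}\mathcal{K}_{\gamma_1+\gamma_2}\mathcal{K}_{\gamma_1},
\end{align*}
valid when $\langle\gamma_1,\gamma_2\rangle=1$ and where $\mathcal{K}_\gamma$ is the Kontsevich-Soibelman symplectomorphism encoding all multiple covers in the direction $\gamma$ (this is exactly the wall-crossing factor produced by summing the transformations (\ref{998}) with $f_\gamma$ determined by the $I_1$ invariants). Reading off both sides in the $\Omega^{loc}$-normalization via (\ref{667}), the only classes with nonzero $\Omega^{loc}_{II}$ are $\pm\gamma_1$, $\pm\gamma_2$, and $\pm(\gamma_1+\gamma_2)$, each with invariant equal to $1$; the reality condition from Theorem \ref{1004} handles the $\pm$ sign symmetry. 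The resulting picture is manifestly compatible with the $\mathbb{Z}_6$ monodromy action permuting $\{\pm\gamma_1,\pm\gamma_2,\pm(\gamma_1+\gamma_2)\}$ as observed before the statement.

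The main obstacle I anticipate is bookkeeping rather than conceptual: I need to check that in the tropical count on the deformed base, no further trees with more than one trivalent vertex contribute within the symplectic-area cutoff $\lambda$, and that multiple-cover bubbling in the class $\gamma_1+\gamma_2$ really corresponds exactly to the $\mathcal{K}_{\gamma_1+\gamma_2}$ factor rather than producing extra $d(\gamma_1+\gamma_2)$ contributions. This is handled by combining Lemma \ref{311} (to keep walls associated to distinct classes disjoint away from a shrinking neighborhood of $0$) with the fact that $\langle\gamma_1+\gamma_2,\gamma_i\rangle=\pm 1$ forces any higher iterated scattering tree to have been already absorbed into the pentagon; together with the uniqueness of the factorization of $F^{can}_{\tilde\phi}$ into wall-crossing automorphisms with phases in $\mbox{Arg}\,Z_\gamma$, this pins down all the invariants and finishes the proof.
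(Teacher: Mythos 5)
Your proposal follows essentially the same route as the paper: deform the type $II$ fibre into two type $I_1$ fibres whose Lefschetz thimbles pair to $\pm 1$ (the preceding lemma), then read off the complete scattering from the pentagon identity $\mathcal{K}_{\gamma_1}\mathcal{K}_{\gamma_2}=\mathcal{K}_{\gamma_2}\mathcal{K}_{\gamma_1+\gamma_2}\mathcal{K}_{\gamma_1}$ together with the deformation invariance of Theorem \ref{399} and Figure \ref{fig:65}. The exactness of the pentagon identity is precisely what disposes of your worry about higher iterated scattering and multiple covers, so the argument is correct and matches the paper's.
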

 
    From the earlier discussion, there are five directions that tropical discs can go into the singularity. This implies that $a_{II}=\frac{5}{6}$ and consecutive ones have the angle differ by $\frac{2\pi}{5}$ with respect to the standard complex structures on $D$ (when we view $D$ as the base of the elliptic fibration). See Figure \ref{fig:90} above. The cut is made to avoid the five affine rays emanating from the singularity locally.
        \begin{figure}
                                               \begin{center}
                                               \includegraphics[height=3in,width=6in]{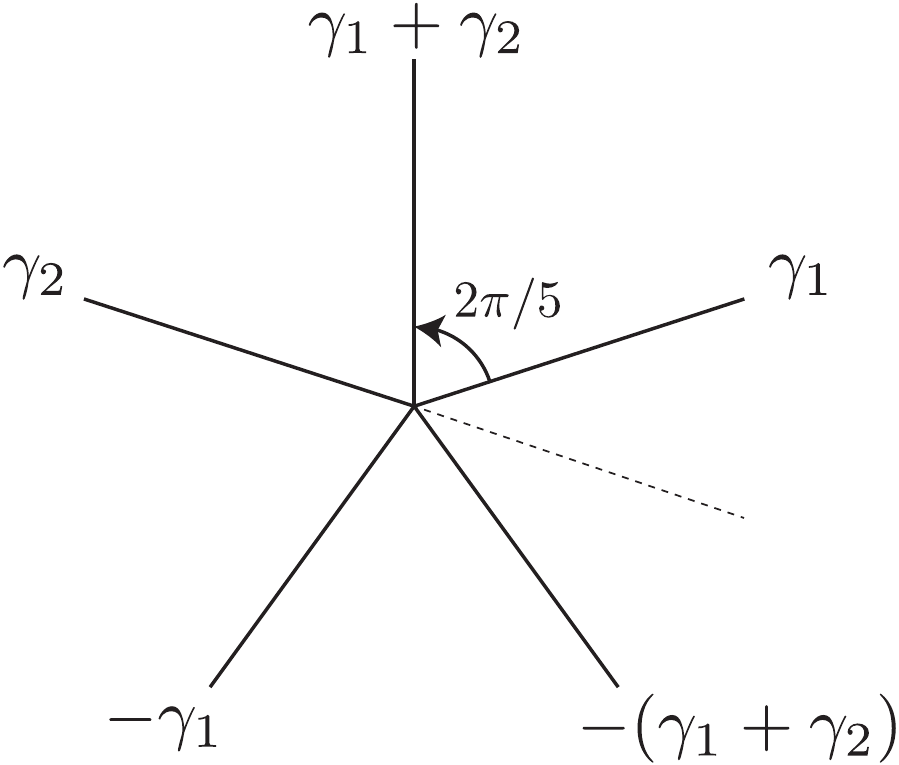}
                                               \caption{Type $II$ Singular fibre. }
                                                \label{fig:90}
                                               \end{center}
                                               \end{figure}

   \subsection{Type $III$ Singular Fibres} \label{919}
     The type $III$ singular fibre is two rational curves intersect at one point of order $2$,  which has Euler characteristic $3$. Therefore, locally an elliptic fibration over a disc with a type $III$ singular fibre as the only singular fibre deform to an elliptic fibration with $3$ type $I_1$ singular fibres. Let $\gamma_1,\gamma_2,\gamma_2$ be the relative classes of Lefschetz thimbles. We have 
         \begin{align*}
          \langle \gamma_1,\gamma_2\rangle=0, \hspace{4mm} \langle \gamma_1,\gamma_3\rangle=\langle\gamma_2,\gamma_3\rangle=1
         \end{align*} and the associate scattering diagram is described in Figure \ref{fig:91}. After choose a suitable $\vartheta$, we may assume that $l_{\gamma_1},l_{\gamma_2},l_{\gamma_3}$ intersect pairwisely. The cut is made to avoid the all possible $l_{\gamma}$s. In particular, we have 
        
     \begin{thm}\label{916} With the notation in Section \ref{919},
        \begin{align*}
                                 \Omega_{III}^{loc}(\gamma;u)=\begin{cases}
                                                         1, & \pm\gamma_1,\pm\gamma_2,\pm\gamma_3, \pm(\gamma_1+\gamma_3),\pm(\gamma_2+\gamma_3),\pm(\gamma_1+\gamma_2+\gamma_3),\\
                                                          0, & otherwise.
                                                         \end{cases}  
                              \end{align*}
     \end{thm}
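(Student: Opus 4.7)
My plan is to proceed in parallel with the proof of Theorem \ref{913}, replacing the two Lefschetz thimbles used there with three. First, by Theorem \ref{399} and equation (\ref{313}), the local invariants $\tilde{\Omega}^{loc}_{III}(\gamma;u)$ equal the limit of tropical counts on any $1$-parameter deformation $(X_U)_t$ of the germ for which all singular fibres are of type $I_1$ when $t\neq 0$. Because the type $III$ fibre has Euler number $3$, such a deformation produces exactly three nearby $I_1$ singular fibres, whose Lefschetz thimbles we identify with $\gamma_1,\gamma_2,\gamma_3\in H_2(X_U,L_u)$. The intersection pairings $\langle \gamma_1,\gamma_2\rangle=0$, $\langle \gamma_1,\gamma_3\rangle=\langle \gamma_2,\gamma_3\rangle=1$ can be forced by computing the product of the associated Picard-Lefschetz transformations and comparing the resulting trace against the known monodromy of a type $III$ fibre, arguing as in the lemma preceding Theorem \ref{913}. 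Each initial wall $l_{\gamma_i}$ carries the Ooguri-Vafa transformation, which in the $\Omega$-normalisation contributes $\Omega(\gamma_i;u)=1$ on primitive classes and $0$ on higher covers.

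Next I would run the Kontsevich-Soibelman scattering in $U$. By choosing $\vartheta$ generic I can arrange, as in Figure \ref{fig:91}, that $l_{\gamma_1}, l_{\gamma_2}, l_{\gamma_3}$ meet pairwise. Since $\gamma_1$ and $\gamma_2$ pair trivially, $\mathcal{K}_{\gamma_1}$ and $\mathcal{K}_{\gamma_2}$ commute. Applying the pentagon identity $\mathcal{K}_{\gamma}\mathcal{K}_{\gamma'}=\mathcal{K}_{\gamma'}\mathcal{K}_{\gamma+\gamma'}\mathcal{K}_{\gamma}$, valid for pairing $1$, first to the pairs $(\gamma_1,\gamma_3)$ and $(\gamma_2,\gamma_3)$ produces new walls along $\gamma_1+\gamma_3$ and $\gamma_2+\gamma_3$. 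A second round of pentagon applied to $(\gamma_1+\gamma_3,\gamma_2)$ and to $(\gamma_2+\gamma_3,\gamma_1)$, each now of pairing $1$, produces the common wall $\gamma_1+\gamma_2+\gamma_3$. Combined with the reality condition of Theorem \ref{1004}(2), the resulting scattering diagram contains precisely the twelve classes listed in the theorem, each carrying $\Omega^{loc}_{III}=1$.

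The main obstacle will be the combinatorial bookkeeping needed to show that this two-round pentagon cascade exhausts the scattering. Concretely, one must verify: the initial angular order of $\mbox{Arg}\,Z_{\gamma_1}, \mbox{Arg}\,Z_{\gamma_2}, \mbox{Arg}\,Z_{\gamma_3}$ near $0$ permits both pentagon rewrites to be carried out in sequence inside $U$; after the cascade, every resulting adjacent pair of walls either commutes or has its scattering already reflected in a wall already generated, so that no infinite $\hat{A}$- or $\hat{D}$-type sub-scattering is triggered; and no higher-multiplicity class such as $2\gamma_1+\gamma_3$ acquires a non-trivial $\Omega$, which will follow from the observation that the resulting walls and their pairings realise exactly the positive roots of the $A_3$ root system (with $\gamma_3$ as the middle node). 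Once this is checked, equation (\ref{313}) together with the Mobius inversion (\ref{667}), using $c(\gamma_i;u)=-1$ for each Lefschetz thimble, converts the tropical count into the claimed value of $\Omega^{loc}_{III}$, and the proof closes exactly as for Theorem \ref{913}.
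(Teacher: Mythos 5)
Your proposal follows essentially the same route as the paper: deform the type $III$ germ to three $I_1$ fibres (Euler number $3$), identify the thimbles $\gamma_1,\gamma_2,\gamma_3$ with pairings $\langle\gamma_1,\gamma_2\rangle=0$, $\langle\gamma_1,\gamma_3\rangle=\langle\gamma_2,\gamma_3\rangle=1$, run the pentagon/scattering cascade, and transport the answer back via Theorem \ref{399} and (\ref{313}); your explicit two-round pentagon computation and the finiteness check via Lemma \ref{311} correctly reproduce what the paper encodes in Figure \ref{fig:91}. The extra details you supply (deriving the pairings from the trace of the monodromy product, and the $A_3$ positive-root bookkeeping confirming $\Omega=1$ on $\gamma_1+\gamma_2+\gamma_3$ and $0$ on all other classes) are elaborations of, not departures from, the paper's argument.
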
    
     \begin{figure}
                                                     \begin{center}
                                                     \includegraphics[height=3in,width=6in]{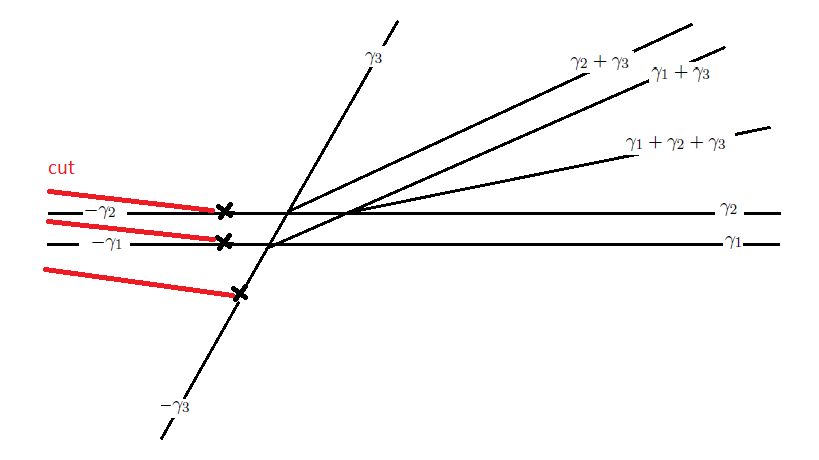}
                                                     \caption{A type $III$ Singular fibre deforms to three type $I_1$ singularities. The total counterclockwise monodromy is $\partial (\gamma_1,\gamma_3)\mapsto \partial(\gamma_1+\gamma_3,-2\gamma_1-\gamma_3)$.}
                                                      \label{fig:91}
                                                     \end{center}
                                                     \end{figure}
             The definition of tropical discs in the presence of type $III$ singular fibres is similar to the Definition \ref{914} with modification similar to that after Theorem \ref{913}.
             
      From the earlier discussion, there are six directions that tropical discs can go into the singularity.  This implies that $a_{III}=\frac{3}{4}$. These six directions are divided into two groups, one are with label in $\{\pm \gamma_1, \pm\gamma_2, \pm(\gamma_1+\gamma_3)\}$ and the other is with label $\{\pm\gamma_3,\pm (\gamma_1+\gamma_2+\gamma_3) \}$. consecutive ones in the same group have angle differ by $\frac{2\pi}{3}$ with respect to the standard complex structures on $D$ (when we view $D$ as the base of the elliptic fibration). Recall that $c_{\gamma}$ is additive with respect to $\gamma$. So if one goes around the origin, the six directions in different groups show up in turn and consecutive directions have angle differ by $\frac{\pi}{3}$. See Figure \ref{fig:100} below. The cut is made to avoid the six affine rays emanating from the singularity. 
              \begin{figure}
                                                     \begin{center}
                                                     \includegraphics[height=3in,width=6in]{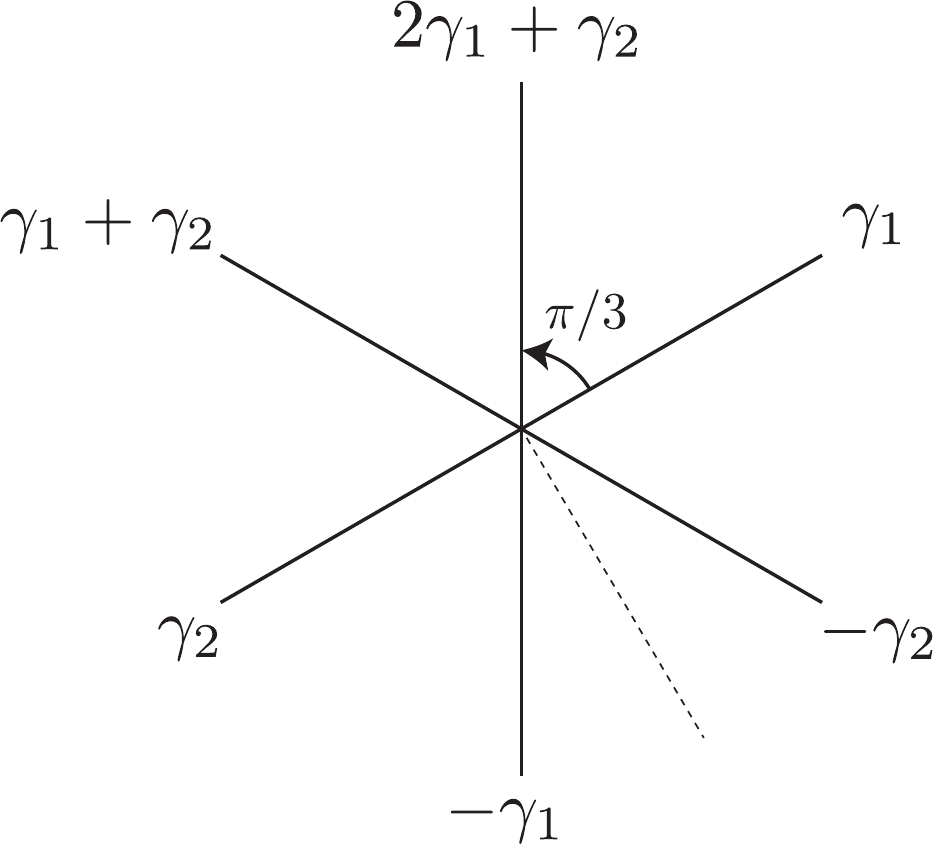}
                                                     \caption{Type $III$ Singular fibre.}
                                                      \label{fig:100}
                                                     \end{center}
                                                     \end{figure}

\begin{rmk}
   The open Gromov-Witten invariants in Theorem \ref{913}, Theorem \ref{916} seems related to the BPS spectrum in weak coupling region in certain examples studied by Gaiotto-Moore-Neitzke \cite{GMN2}. 
\end{rmk}     
   \subsection{Type $IV$ Singular Fibres} \label{918}
     The type $IV$ singular fibre is three rational curves intersect at one point, which has Euler characteristic $4$. Consider the degeneration $\epsilon \rightarrow 0$ of the family
        \begin{align*}
           y^2=x^3+(z^2+\epsilon)x+(z^2+\epsilon).
        \end{align*}  By Tate's algorithm, the family has one singular fibre of type $IV$ when $\epsilon=0$ and the family has two singular fibres of type $II$ when $\epsilon\neq 0$. By Theorem \ref{913}, there are five rays coming out from each type $II$ singularity (see Figure \label{fig:90}). By choosing a particular $\vartheta$, one of the five rays coming out from a type $II$ singularity goes into another. Let $\gamma_1,\gamma_2$ be two relative classes such that the monodromy around the first type $II$ singular fibre is $\bigg(\begin{matrix} 0 & 1\\ -1 & 1\end{matrix}\bigg)$ with respect to the basis and the ray going to another type $II$ singularity labeled by $\gamma_1+\gamma_2$. Assume that the monodromy around the second type $II$ singular fibre is $A$, then $A\bigg(\begin{matrix} 0 & 1\\ -1 & 1\end{matrix}\bigg)$ is conjugate to $\bigg(\begin{matrix} -1 & 1\\ -1 & 0\end{matrix}\bigg)$. Direct computation shows that there is a unique solution that $A=\bigg(\begin{matrix} 0 & 1\\ -1 & 1\end{matrix}\bigg)$ and thus the BPS ray label by $\gamma_1+\gamma_2$ from the first type $II$ singularity overlaps the ray labeled by $-(\gamma_1+\gamma_2)$ from the other type $II$ singularity. We may slightly deform $\vartheta$ and the corresponding scattering diagram is described in Figure \ref{fig:99}. The cut are made to be parallel to $l_{\pm(\gamma_1+\gamma_2)}$. Straightforward calculation shows that $\gamma_1,\gamma_2$ are the unique basis of $H_1(L_u)$ such that the monodromy around the type $IV$ singular fibre is given by $\bigg(\begin{matrix}
             -1 & 1 \\ -1 & 0 
        \end{matrix}\bigg)$ (up to a conjugation of the power of $\bigg(\begin{matrix} 0 & 1\\ -1 & 1\end{matrix}\bigg)$).
         \begin{figure}
                                                \begin{center}
                                                \includegraphics[height=3in,width=6in]{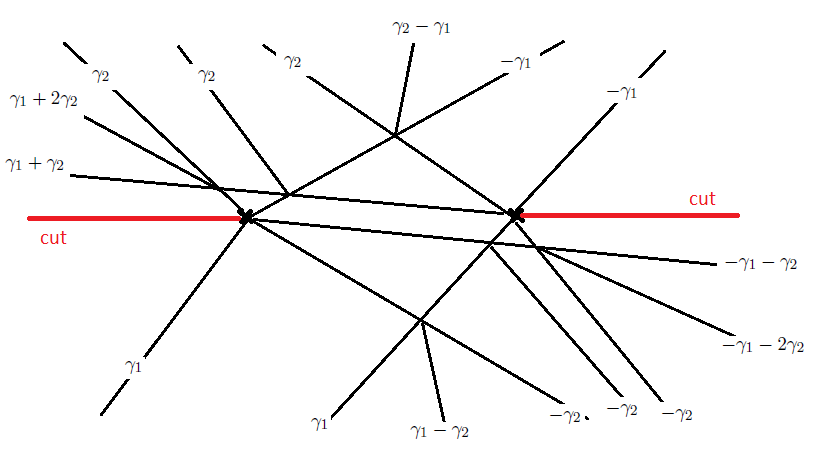}
                                                \caption{A type $IV$ singularity deforms to two type $II$ singularities. The total counterclockwise monodromy is $\partial(\gamma_1,\gamma_2)\mapsto \partial(-\gamma_1-\gamma_2,\gamma_1)$.}
                                                 \label{fig:99}
                                                \end{center}
                                                \end{figure}
        \begin{thm} \label{917} 
        With the notation in Section \ref{918},
            \begin{align*}
                         \Omega_{IV}^{loc}(\gamma;u)=\begin{cases}
                                                 3, & \gamma=\pm \gamma_1,\pm\gamma_2,\pm(\gamma_1+\gamma_2),\\
                                                 1, & \gamma=\pm (\gamma_1-\gamma_2),\pm(\gamma_1+2\gamma_2),\pm (2\gamma_1+\gamma_2),\\
                                                  0, & otherwise.
                                                 \end{cases}  
                      \end{align*}
        \end{thm}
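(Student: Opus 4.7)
The plan is to mimic the proof of Theorem \ref{913}, replacing the deformation of a type $II$ fibre into two type $I_1$ fibres by the deformation of the type $IV$ fibre into two type $II$ fibres given by the family $y^2=x^3+(z^2+\epsilon)x+(z^2+\epsilon)$. Once the scattering diagram in a neighborhood of the type $IV$ point is worked out for $\epsilon\neq 0$, formula (\ref{313}) together with the correspondence theorem $\tilde{\Omega}_t^{loc}=\tilde{\Omega}_t^{trop,loc}$ for fibrations with only type $I_1$ (or, inductively, type $II$) fibres will compute $\tilde{\Omega}^{loc}_{IV}(\gamma;u)$.

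First I would fix the integral basis $\gamma_1,\gamma_2$ of $H_1(L_u)$ in which the local monodromy is $\bigl(\begin{smallmatrix}-1 & 1\\ -1 & 0\end{smallmatrix}\bigr)=\bigl(\begin{smallmatrix}0 & 1\\ -1 & 1\end{smallmatrix}\bigr)^2$, as identified in the paragraph preceding the statement. In this basis both type $II$ singularities have monodromy $\bigl(\begin{smallmatrix}0 & 1\\ -1 & 1\end{smallmatrix}\bigr)$, and by Theorem \ref{913} each one emits the five BPS rays labelled by $\{\pm\gamma_1,\pm\gamma_2,\pm(\gamma_1+\gamma_2)\}$, with local count $1$. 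The choice of $\vartheta$ in the paper aligns the $\gamma_1+\gamma_2$ ray from $p_1$ with the $-(\gamma_1+\gamma_2)$ ray from $p_2$; I would first perturb $\vartheta$ slightly so that no coincidences remain, record the ten rays in their cyclic order by argument, and then let $\vartheta$ come back by a wall-crossing argument.

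The second step is to compute the cascade of secondary rays produced when a ray of $p_1$ crosses a ray of $p_2$. Whenever two rays labelled by $\alpha, \beta \in \{\pm\gamma_1, \pm\gamma_2, \pm(\gamma_1+\gamma_2)\}$ with $\langle \alpha,\beta\rangle=\pm1$ meet, the pentagon identity $\mathcal{K}_{\alpha}\mathcal{K}_{\beta}=\mathcal{K}_{\beta}\mathcal{K}_{\alpha+\beta}\mathcal{K}_{\alpha}$ introduces a new outgoing ray in class $\alpha+\beta$, while commuting rays (such as $\pm(\gamma_1+\gamma_2)$ against itself) leave only a multiplicity contribution. Tabulating the pairs $(\alpha,\beta)$ that actually cross inside the disc once $\vartheta$ is sent back to its symmetric position yields exactly the new primitive classes $\pm(\gamma_1-\gamma_2),\ \pm(\gamma_1+2\gamma_2),\ \pm(2\gamma_1+\gamma_2)$, each appearing once, while $\pm\gamma_1,\pm\gamma_2,\pm(\gamma_1+\gamma_2)$ each receive the original contribution from their respective $p_i$ plus one additional contribution from the secondary scattering with the overlapping ray from the other singularity, giving total count $3$. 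The resulting factorization must equal the total monodromy $\bigl(\begin{smallmatrix}-1 & 1\\ -1 & 0\end{smallmatrix}\bigr)$ around the type $IV$ point, and I would verify this by multiplying out the proposed $\mathcal{K}_{\alpha}^{\Omega^{loc}_{IV}(\alpha)}$ in the correct argument order.

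Finally, passing $\epsilon\to 0$ merges the two $p_i$ into the type $IV$ point; Lemma \ref{311} together with (\ref{313}) show that the affine lines $l_\gamma^\epsilon$ converge to the rays emanating from $0$, and no further walls appear in the limit since the central charge has form (\ref{799}) with $a_{IV}/k = 2/3$. This promotes the tropical count $\tilde{\Omega}^{trop,loc}_\epsilon(\gamma;u)$ read off from the scattering diagram to the stated value of $\tilde{\Omega}^{loc}_{IV}(\gamma;u)$. The main obstacle will be the bookkeeping in the second step: with ten incoming rays, the pentagon cascade must be organized so that every secondary collision is accounted for exactly once and no further tertiary scattering contributes non-trivially. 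The cleanest way to control this is to verify that the two candidate products of $\mathcal{K}_{\alpha}$ (before and after all pentagon moves) agree on both generators $\gamma_1,\gamma_2$ modulo any desired Novikov filtration, which reduces the combinatorial check to a finite matrix computation.
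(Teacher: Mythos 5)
Your proposal follows essentially the same route as the paper: deform the type $IV$ fibre into two type $II$ fibres via the family $y^2=x^3+(z^2+\epsilon)x+(z^2+\epsilon)$, feed in Theorem \ref{913} for the five rays at each type $II$ point, resolve the overlapping $\pm(\gamma_1+\gamma_2)$ rays by perturbing $\vartheta$, run the pentagon scattering to produce the classes $\pm(\gamma_1-\gamma_2),\pm(\gamma_1+2\gamma_2),\pm(2\gamma_1+\gamma_2)$ and the multiplicity $3$ on the primitive orbit, and pass to the limit using (\ref{312})--(\ref{313}). The consistency check against the total monodromy $\bigl(\begin{smallmatrix}-1&1\\-1&0\end{smallmatrix}\bigr)$ that you propose at the end is exactly the right way to close the combinatorial bookkeeping.
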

             
          From the earlier discussion, there are eight directions that tropical discs can go into the singularity.  This implies that $a_{IV}=\frac{4}{3}$. These eight directions are divided into two groups, one are with label in $\{\pm \gamma_1, \pm \gamma_2, \pm (\gamma_1+\gamma_2)\}$ and the other is with label $\{\pm(\gamma_1-\gamma_2),\pm (\gamma_1+2\gamma_2), \pm(2\gamma_1+\gamma_2) \}$. Consecutive ones in the same group have angle differ by $\frac{\pi}{2}$ with respect to the standard complex structures on $D$ (when we view $D$ as the base of the elliptic fibration). Recall that $c_{\gamma}$ is additive with respect to $\gamma$. So if one goes around the origin, the eight directions in different groups show up in turn and consecutive directions have angle differ by $\frac{\pi}{4}$.  See Figure \ref{fig:101} above. The cut is made to avoid the $8$ affine rays emanating from the singularity. 
                  \begin{figure}
                                                         \begin{center}
                                                         \includegraphics[height=3in,width=6in]{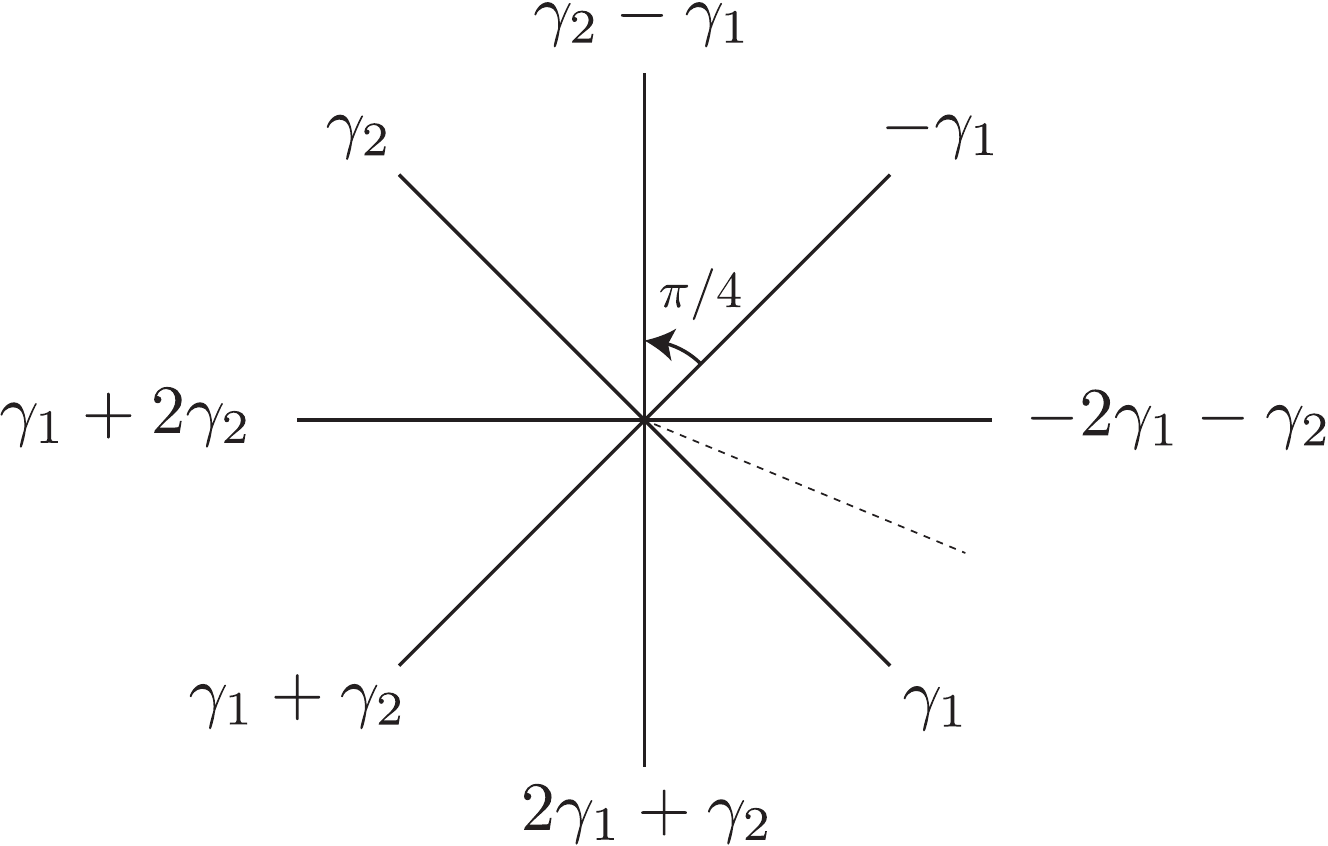}
                                                         \caption{Type $IV$ Singular fibre.}
                                                          \label{fig:101}
                                                         \end{center}
                                                         \end{figure}   
    
   \subsection{Type $I_0^*$ Singular Fibers}\label{920}
 The $I_0^*$ locally arises from the following construction: consider the local non-singular fibration $Y=\mathbb{C}\times D/\Lambda$ where $\Lambda=\mathbb{Z}+\mathbb{Z}\tau(u)$ is a rank $2$ lattice such that $\tau(u)=\tau+u^{2k}$, $k\in \mathbb{Z}$. Then the map $\iota:(x,u)\rightarrow (-x,-u)$ defines an involution on $Y$ and the quotient $X_0=Y/\iota$ is an elliptic fibration with the pillow case as the central fibre. Let $X$ be the blow up the four $A_1$ singularities of the central fibre and $X$ gives the general elliptic fibration with an $I_0^*$ singular fibre. We may deform $\tau$ and again by (\ref{913}) and Theorem \ref{1004}, we have the following theorem:      
  \begin{thm} 
     Let $A\in SL_2(\mathbb{Z})$ and $\gamma$ is a relative class. Then 
        \begin{align*}
           \tilde{\Omega}_{I_0^*}^{loc}(\gamma;u)=\tilde{\Omega}_{I_0^*}^{loc}(A\gamma;u)   .     
       \end{align*} In particular, the local open Gromov-Witten invariant of $\gamma$ only depends on the divisibility of $\gamma$. 
  \end{thm}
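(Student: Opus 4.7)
The plan is to combine the $SL_2(\mathbb{Z})$-symmetry of the upper half plane parametrising the complex structures of the generic fibre with the deformation invariance established in Theorem \ref{399} (in its local form, equation (\ref{312})). First I would note that the construction recalled just above the theorem depends on the complex parameter $\tau \in \mathcal{H}$ entering $\Lambda = \mathbb{Z} + \mathbb{Z}\tau(u)$. Varying $\tau$ along a path in $\mathcal{H}$ produces a $1$-parameter family $(X_U)_t$ of germs of elliptic fibrations, each with a single $I_0^*$ singular fibre at the origin, and with $(X_U)_0 = X_U$. By (\ref{312}) applied to this family the local invariant $\tilde{\Omega}^{loc}_{I_0^*}(\gamma;u)$ is independent of $t$, since it is integer valued and depends continuously on the deformation parameter (the characterisation of affine lines in (\ref{913}) continues to apply and Lemma \ref{311} controls how the walls move).

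Second, for $A = \bigl(\begin{smallmatrix} a & b \\ c & d \end{smallmatrix}\bigr) \in SL_2(\mathbb{Z})$ set $\tau' = (a\tau+b)/(c\tau+d)$. The linear map $z \mapsto z/(c\tau+d)$ on $\mathbb{C}$ sends $\mathbb{Z}+\mathbb{Z}\tau$ isomorphically onto $\mathbb{Z}+\mathbb{Z}\tau'$ and commutes with the involution $\iota:(x,u)\mapsto(-x,-u)$, so after blowing up the four $A_1$ points it descends to a biholomorphism $X_U(\tau) \cong X_U(\tau')$. The induced map on $H_1(L_u)$ is precisely $A$, and hence on $H_2(X_U,L_u)$ it sends $\gamma$ to $A\gamma$. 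In particular the local invariants transform as
\begin{align*}
   \tilde{\Omega}^{loc}_{I_0^*,\tau}(\gamma;u) = \tilde{\Omega}^{loc}_{I_0^*,\tau'}(A\gamma;u).
\end{align*}

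Third, since $\mathcal{H}$ is connected I can connect $\tau$ to $\tau'$ by a path in $\mathcal{H}$; the deformation invariance from the first step then identifies
\begin{align*}
   \tilde{\Omega}^{loc}_{I_0^*,\tau}(A\gamma;u) = \tilde{\Omega}^{loc}_{I_0^*,\tau'}(A\gamma;u),
\end{align*}
which combined with the second step yields $\tilde{\Omega}^{loc}_{I_0^*}(\gamma;u) = \tilde{\Omega}^{loc}_{I_0^*}(A\gamma;u)$. The ``in particular'' clause is then immediate from the classical fact that the $SL_2(\mathbb{Z})$-orbits on $\mathbb{Z}^2$ are classified by divisibility.

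The main subtlety to verify is the first step: the family $(X_U)_t$ passes through $I_0^*$ fibrations throughout, not through $I_1$ deformations, so one cannot invoke Theorem \ref{399} verbatim but must use its local form (\ref{312}) together with the observation after that theorem that the argument applies to any $1$-parameter family for which the walls $l_{\gamma}^t$ satisfy Lemma \ref{311}. One has to check that moving $\tau$ in $\mathcal{H}$ does not create new walls accumulating at a generic reference point $u \notin W'_\gamma$; this follows because the central charges $Z_{\gamma}$ vary continuously in $\tau$ and the finiteness of walls with $|Z_{\gamma'}(u)| < \lambda$ (used in the proof of Theorem \ref{399}) is preserved. Once this is granted, the rest of the argument is formal.
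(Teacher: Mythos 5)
Your proposal is correct and is essentially the paper's own argument: the paper's entire proof is the sentence ``we may deform $\tau$'' combined with the asymptotics of $Z_\gamma$ and the deformation-invariance properties of Theorem \ref{1004}, which is exactly your combination of the modular action $\tau\mapsto(a\tau+b)/(c\tau+d)$ realizing $A$ on $H_1(L_u)$ with invariance of $\tilde{\Omega}^{loc}_{I_0^*}$ along a path in $\mathcal{H}$. Your version simply spells out the details (the biholomorphism $z\mapsto z/(c\tau+d)$ commuting with $\iota$, and the wall-control needed to move $\tau$) that the paper leaves implicit.
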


\begin{bibdiv}
\begin{biblist}
\bibselect{file001}
\end{biblist}
\end{bibdiv}

Boston University\\
E-mail address: yslin@bu.edu

\end{document}